\newenvironment{psmallmatrix}
  {\left(\begin{smallmatrix}}
  {\end{smallmatrix}\right)}
\newenvironment{bsmallmatrix}
  {\left[\begin{smallmatrix}}
  {\end{smallmatrix}\right]}
\newcommand{\widesim}[2][1.5]{
  \mathrel{\underset{\scriptscriptstyle #2}{\scalebox{#1}[1]{$\sim$}}}
}
\newtheorem{thm}[subsection]{Theorem}
\newtheorem{defn}[subsection]{Definition}
\newtheorem{lemma}[subsection]{Lemma}
\newtheorem{cor}[subsection]{Corollary}
\theoremstyle{definition}
\newtheorem{rmk}[subsection]{Remark}
\DeclareMathOperator{\Cl}{Cl}
\DeclareMathOperator{\Epin}{Epin}
\DeclareMathOperator{\Spin}{Spin}
\DeclareMathOperator{\rank}{rank}
\DeclareMathOperator{\End(O)}{End(O)}
\date{\vspace{-5ex}}
\begin{document}
\title {Two Lives : Compositions of Unimodular rows}

\author{Vineeth Chintala}
\address{University of Hyderabad, Hyderabad-500046, Telangana, India.}

\begin{abstract}
The paper lays the foundation for the study of unimodular rows using Spin groups. We show that elementary orbits of unimodular rows (of any length $n\geq 3$) are equivalent to elementary Spin orbits on the unit sphere. (This bijection is proved over all commutative rings). In the special case $n=3$, this leads to an interpretation of the Vaserstein symbol using Spin groups.

  In addition, we introduce a new composition law that operates on certain subspaces of the underlying quadratic space (using the multiplication in composition algebras). In particular, the special case of split-quaternions leads to the composition of unimodular rows (discovered by L. Vaserstein and later generalized by W. van der Kallen). Strikingly, with this approach, we now see the possibility of new orbit structures not only for unimodular rows (using octonion multiplication) but also for more general quadratic spaces.
\end{abstract}

\date{}
\subjclass[2010]{15A63, 15A66, 13C10, 19A13}

\keywords{Stably-free modules, unimodular rows, Spin groups, Composition, Clifford algebra}

\maketitle

\section{Introduction}

When multiple research areas evolve around the same object, one expects that there is a connection between them. The more distinct the methods are, the more fruitful the connection will be. 
In this paper, we will explore this double life for \emph{unimodular rows}. Though unimodular rows are primarily used as a tool to study Projective modules, we will see here that they can also be fruitfully employed from the perspective of Quadratic forms and Spin groups. One consequence of this approach is that it gives a neat interpretation of some surprising results like the Vaserstein symbol, through a simple composition law operating in the background. On the other hand, we arrive at new questions in this development via quadratic forms. We currently know (through the work of W. van der Kallen~\cite{vdk2} and others) that the Vaserstein symbol can be generalized to a group law on certain (higher-dimensional) orbit-spaces of unimodular rows. But now, when interpreted as a result in quadratic forms, there is the exciting possibility that such group laws may generalize beyond hyperbolic quadratic spaces. In particular, we see (in Part C) that Vaserstein composition corresponds to the special case of split quaternions, and if we pick another composition algebra, we get a different composition rule. 

Let $R$ be any commutative ring. A vector $v = (a_1, \cdots, a_n) \in R^n$ is called a unimodular row (of length $n$) if $v \cdot w^\intercal  =1$ for some $w= (b_1,\cdots, b_n) \in R^n.$ Let $Um_n(R)$ denote the set of unimodular rows of length $n.$ Here are a few places where unimodular rows turn up.

\vskip 5mm \subsection{First life : Cancellation of Projective modules}
 The study of projective modules is one of the primary motivations to investigate unimodular rows. Given a vector $v \in R^n$, consider the map $P_v : R^n \rightarrow R$, given by 
\[P_v(w) = v\cdot w^\intercal .\]

Whenever $v $ is a unimodular row, the kernel of $P_v$ becomes a stably free module of rank $n-1$ (and hence a finitely generated projective module).  Note that the group $GL_n(R)$ (and hence any subgroup) acts on the right on the set of unimodular rows $Um_n(R).$
One way to show that the projective module $ker(P_v)$ is free is to show that $v$ appears as a row in an invertible matrix.   Thus the interest in unimodular rows began, and grew with the Quillen-Suslin theorem (also known as Serre's problem) which states that finitely generated projective modules over polynomial-rings are free - Quillen received a Fields medal in 1978 in part for his proof of the theorem (see \cite{Lam}).  
As one goes beyond polynomial rings, the orbits may not be trivial, leading naturally to the study of quotients such as $Um_n(R)/SL_n(R)$ and $Um_n(R)/E_n(R)$ and there is a rich array of results stating conditions under which these orbit spaces have an abelian group structure (see for example \cite{DTZ, SV, vdk2}). As we will see, these same orbits can also be examined from a different point of view, as Spin-orbits on the unit sphere.

\vskip 5mm \subsection{Second life : Group structures on spheres.} 

  Consider the space $H(R^n) = R^n \oplus (R^n)^*$, equipped with a quadratic form $$q(x,y) = x\cdot y^\intercal .$$  
  
Let $U_{2n-1}(R)$ denote the unit sphere, i. e., the set of all elements $(v,w) \in H(R^n)$ such that $v \cdot w^{\intercal}=1.$
Suppose there is another element $w' \in R^n$ such that $v\cdot w'^\intercal  = v\cdot w^\intercal =1.$ Then it turns out (Theorem \ref{thm1}) that the two points on the unit sphere - $(v,w)$ and $(v,w')$ - lie on the same orbit under the action of $\Epin_{2n}(R)$, a normal subgroup of $\Spin_{2n}(R)$ called the elementary Spin group. (We will describe the elementary Spin group in Section 3).  

\vskip 2mm
\noindent This gives us the map
\[ v \rightarrow \frac{(v,w)}{\Epin_{2n}(R)}. \]

\noindent We will show that the kernel of the above map is the orbit of $v$ under the action of the elementary linear group $E_n(R).$
\[ \frac{v}{E_n(R)} \longleftrightarrow \frac{(v,w)}{\Epin_{2n}(R)} \]
The main result of the paper (Theorem \ref{main}) is the following bijection
\[ \frac{Um_n(R)}{E_n(R)} \longleftrightarrow \frac{U_{2n-1}(R)}{\Epin_{2n}(R)} =  \frac{U_{2n-1}(R)}{EO_{2n}(R)} \]
In short, orbits of unimodular rows can be studied as orbits of points on the unit sphere whose geometry is more familiar to us. 
The seminal paper of Suslin-Vaserstein \cite{SV}  contains some hints of the above bijection for $n=3$ (though they don't talk about Spin groups). In this paper, we will prove this bijection and generalize it to any $n \geq 3$ (Theorem \ref{main}). 

\vskip 1mm
The same paper of Suslin-Vaserstein \cite[Section 5]{SV} also introduced what is now referred to as the Vaserstein symbol, which revealed a surprising symplectic structure for the orbit spaces $\frac{Um_3(R)}{E_3(R)}.$ When $n=3,$ we have $\Epin_6(R) \cong E_4(R)$ which will be used to show that there is a bijection between $\frac{Um_3(R)}{E_3(R)}$ and the $E_4(R)$-orbits of $4\times 4$ alternating matrices with Pfaffian $1$ (where $ M \sim gMg^T$ for $g \in E_4(R)$). This gives an interpretation of  the Vaserstein symbol using Spin groups (see Part B and Theorem \ref{vaserstein}).

\vskip 2mm

The second contribution of this paper is the introduction of  a new composition law that holds on certain subspaces of the hyperbolic space $H(R^n) =R^n \oplus (R^n)^*$ (using quaternion multiplication). This composition law (on matrices) generalizes the Vaserstein composition for unimodular rows (see Part B and Remark \ref{rmk1} in Part C).

One quality of the composition law is that it is independent of the dimension of $R$, whereas it seems necessary to place such restrictions on the base ring $R$ to get group structures on orbit spaces of unimodular rows (in some situations). What is the reason for this dependence on the dimension of $R$, and how do we arrive at \emph{this} dependence : in this case, $\dim(R) \leq 2n- 3$? Looking back, there are two results that hint at a general composition law operating in the background - one is the Vaserstein symbol (see Part B), the other being the Mennicke-Newman Lemma (see \cite[Lemma 3.2]{vdk3}) that essentially says that, under the above dimension restrictions, one can project two points of the unit sphere $U_{2n-1}(R)$ onto the same $(n+1)$-dimensional subspace where the composition law operates (see Remarks \ref{rmk1}, \ref{rmk2}). 

This investigation using quadratic forms opens the door for research in two striking general directions : 
\begin{enumerate}[label={\alph*.}]
\item  In Section \ref{octonioncl}, we use the multiplication of split-octonions to define a (nonassociative) composition law on $(n+4)$-dimensional subspaces of $H(R^{n})$, suggesting that there may be a quasigroup structure on orbits of unimodular rows. 

\item Let $(V,q)$ be any quadratic space, and $U$ the set of unit vectors of $V$ ($q(x) = 1$). When is there a group structure on the orbit spaces $\frac{U}{Spin(V)}$?
\end{enumerate}

\vskip 5mm \subsection{The other lives of unimodular rows} 
The vector $v =(a,b,c)$ also corresponds to coefficients of the quadratic form $ax^2 + bxy +cy^2.$ The condition $v\cdot w^\intercal  = 1$ can then be seen as a restriction to \emph{primitive} quadratic forms. In the study of unimodular rows, one is mainly concerned with $SL_3(R)$ orbits of $Um_3(R)$, whereas Gauss's composition gives a group structure on the $SL_2(\mathbb{Z})$ orbits of binary quadratic forms. It is known that  Gauss's composition extends to an arbitrary base ring (see \cite{K} and \cite{W}).  It is also known that if $\frac{1}{2} \in R$ and the discriminant $ b^2 -4ac$ is a square, then the unimodular row $(a,b,c)$ is completable and the corresponding projective module is free (see \cite{KM} or \cite{Ko}). But it is not known whether there are deeper connections between projective modules and M. Bhargava's work \cite{Bh1} on the composition laws for quadratic and higher forms - an intriguing line to pursue, that hopefully future research can shed some light on.  
 
\begin{rmk}
There are many other active areas related to unimodular rows -  notably, Euler class groups (\cite{BRS, DTZ}), Grothendieck-Witt groups (\cite{FRS}), $\mathbb{A}^1$-homotopy theory  (\cite{AF1, AF2, AF3}) and Suslin Matrices (see \cite{RJ} for a survey). 

 Under certain smoothness conditions for a ring $R$ with (Krull) dimension $d$,  J. Fasel has given an interpretation of $\frac{Um_{d+1}(R)}{E_{d+1}(R)}$ in terms of cohomology (see \cite{F1}). More recently this quotient space has been explicitly computed in \cite{DTZ} for some rings. 
Ravi Rao and Selby Jose have written a series of papers (\cite{JR1, JR2}) examining general quotients  $\frac{Um_n(R)}{E_n(R)}$ by studying the algebraic properties of Suslin matrices. As you can tell, the behaviour of the quotient $\frac{Um_n(R)}{E_n(R)}$ depends on the base ring $R$ (especially its Krull dimension), and there is a continuing trend simplifying the hypothesis on the base ring to construct and analyze the structure of the orbit spaces (see for example \cite{FRS, GRK, GGR, Gu, SS} or Part II of the recent conference proceedings \cite{AHS}). 
\vskip 2mm

The Vaserstein symbol gives a symplectic structure to the orbit-spaces of unimodular rows and plays an important role in the study of stably-free modules. It was first introduced in \cite[Section 5]{SV} where orbits of unimodular rows were investigated under the action of both linear and symplectic groups. Further investigation of the symplectic orbits can be found in \cite{CR1, CR2, TS2}. The recent work of T. Syed \cite{TS1} generalizes the Vaserstein symbol to study the orbit spaces $\frac{Um(R+ P)}{E(R+P)}$, where $P$ is a rank-$2$ projective module with a fixed trivialization of its determinant. 

A. Asok and J. Fasel have provided an interpretation of the Vaserstein symbol in terms of $\mathbb{A}^1$-homotopy theory (see \cite{F2}) and we will explain this connection briefly in Part B.  In \cite[Theorem 7.5]{FRS} the Vaserstein symbol was used to prove that stably free modules of rank $d-1$ are free under certain smoothness conditions ($R$ is a smooth affine $k$-algebra of dimension $d\geq 3$, where $k$ is an algebraically closed field and $\frac{1}{(d-1)!} \in k$), thus settling a long-standing question of A. Suslin.   
 
Perhaps some day,  another mathematician will write a ``many lives" generalization of this paper. 
\end{rmk}

\tableofcontents
 \subsection{Overview}
The paper is broken down into three parts and can be read non-linearly. A reader whose main interest is unimodular rows may begin with Part A, where the connection to Spin groups is explored in detail. Alternatively, a person who is curious about general quadratic forms may find it profitable to look at Part C first, where a new composition law is defined using the multiplication in composition algebras. Here the Vaserstein composition (for unimodular rows) corresponds to the special case of split quaternions. Finally, those who are comfortable with both the worlds and prefer to quickly know what is going on, may begin with Part B which acts as a bridge (examining the Vaserstein symbol), and then read around accordingly.

Essentially the paper makes two contributions. First, we look at (elementary) orbits of unimodular rows and prove that they correspond to (elementary) Spin-orbits on the unit sphere.  Secondly, we introduce a composition law - that holds in certain $(n+2)$-dimensional subspaces of $H(R^n).$ This composition (in terms of matrices) follows a simple recursive rule, starting with the multiplication of split-quaternions. When $n=3$, it has the \emph{same} properties as the composition law (on unimodular rows) discovered by L. Vaserstein. For general $n$, it describes in a simple matrix form, the composition introduced by van der Kallen using what are known as weak Mennicke symbols. This lays the foundation for the study of unimodular rows using Spin groups. The general formulation of the composition law also raises the possibility of new orbit structures using octonion multiplication.

\subsection{Notation}
All modules $V$ defined in the paper are free $R$-modules over some commutative ring $R.$ No condition is placed on $R$ so the results in the paper hold for all commutative rings. A quadratic form $q$ on $V$ is said to be \emph{non-degenerate} when the corresponding bilinear form is non-degenerate, i. e., $\langle x,v\rangle =0 $ holds for all $v \in V$, 
if and only if $x=0$. (Here $\langle v,w\rangle = q(v+w) - q(v) - q(w)$ for $v,w \in V.$)

%
%% PART 1!!!
%%% 
 %%%%
%%%%%%  Spin groups 

{\centering \Large \part{The bijection between (elementary) Spin-orbits on the sphere and  the elementary orbits of unimodular rows}}
\vskip 5mm

\section {Preliminaries : From Clifford algebra to Suslin matrices}
\vskip 2mm

For general literature on Clifford algebras and Spin groups over a commutative ring, see \cite{B2,Kn}.
\vskip 1mm

\subsection{Clifford Algebras}
Let $V$ be a free $R$-module where $R$ is any commutative ring. If we equip $V$ with a non-degenerate quadratic form $q$, then $(V,q)$ is called a quadratic space. The algebra $\Cl(V,q)$ is the ``freest" algebra generated by $V $ subject to the condition 
$x^2 = q(x)$ for all $x \in V.$ More precisely, $\Cl(V,q)$ is the quotient of the tensor algebra $$T(V) = R \oplus V\oplus V^{\otimes 2} \oplus \cdots \oplus V^{\otimes n} \oplus \cdots$$ 
by the two sided ideal $I(V,q)$ generated by all the elements $x\otimes x - q(x)$ with $x \in V.$ 
\vskip 2mm
\noindent For the purpose of this article, we only need to know the following basic properties of Clifford algebras :
\begin{itemize}
\item \emph{$\mathbb{Z}_2$-grading}:  Grading $T(V)$ by even and odd degrees, it follows that the Clifford algebra has a $\mathbb{Z}_2$-grading $\Cl(V,q) =  \Cl_0\oplus \Cl_1$ such that $V \subseteq \Cl_1$ and $\Cl_i\Cl_j\subseteq \Cl_{i+j}$ ($i,j$ mod $2$). 

\vskip 1mm
\item  \emph{Universal property}: Let $ i : V \hookrightarrow Cl(V,q)$ denote the inclusion map and $A$ be any associative algebra over $R.$ Then any linear map 
$j : V \rightarrow A$ such that 
\[ j(x)^2 = q(x) \text{ for all $x \in V$},\]
lifts to a unique $R$-algebra homomorphism $f : \Cl(V,q) \rightarrow A$ such that 
$f \circ i = j.$

\item \emph{Basis of $\Cl$} : The elements of $V$ generate the Clifford algebra. Furthermore, the following result implies that if $\rank(V) =n$, then $\rank(Cl) = 2^n.$

 \begin{thm}(Poincar\'e-Birkhoff-Witt)
 \\Let $\{v_1,\cdots, v_n\}$ be a basis of $(V,q).$ Then $\{v_1^{e_1}\cdots v_n^{e_n} : e_i =0,1\}$ is a basis of $\Cl(V,q).$ 
  \end{thm}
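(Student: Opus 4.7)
The plan is two-fold: first show that the $2^n$ ordered monomials span $\Cl(V,q)$, then establish their linear independence by constructing a faithful representation on the exterior algebra $\Lambda(V)$.

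Spanning is the easy half. Applying the Clifford relation $x^2 = q(x)$ with $x = v_i + v_j$ yields $v_i v_j + v_j v_i = \langle v_i, v_j\rangle \in R$, and with $x = v_i$ it gives $v_i^2 = q(v_i) \in R$. Inside $\Cl(V,q)$ one can therefore swap adjacent factors of any monomial in the $v_i$ modulo a scalar correction, and collapse repeated factors to scalars. Iterating, every product of the generators becomes an $R$-linear combination of ordered squarefree monomials $v_{i_1}\cdots v_{i_k}$ with $i_1<\cdots<i_k$; since the $v_i$ generate $\Cl(V,q)$ as an algebra, the $2^n$ such monomials (including the empty product $1$) span.

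For independence, the strategy is to build an $R$-linear map $j : V \to \mathrm{End}_R(\Lambda(V))$ with $j(v)^2 = q(v)\cdot \mathrm{id}$ and then invoke the universal property of $\Cl(V,q)$ to extend it to an algebra homomorphism $f : \Cl(V,q) \to \mathrm{End}_R(\Lambda(V))$. The right construction is Chevalley's: $j(v)$ acts on an ordered wedge $v_{i_1}\wedge\cdots\wedge v_{i_k}$ by ``insertion plus contraction'', inserting $v$ in the appropriate sorted position (with sign) when its index is absent, and otherwise deleting the matching factor with a contribution $q(v)$ while picking up the off-diagonal terms $\langle v, v_{i_s}\rangle$ from the remaining factors. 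One then verifies on an ordered monomial that $f(v_{i_1}\cdots v_{i_k})(1) = v_{i_1}\wedge\cdots\wedge v_{i_k} + (\text{terms of strictly lower wedge-degree})$. Since the ordered wedges form an $R$-basis of the free module $\Lambda(V)$, a triangular change-of-basis argument forces the vectors $f(v_{i_1}\cdots v_{i_k})(1)$ to be $R$-linearly independent in $\Lambda(V)$, whence the ordered monomials in $\Cl(V,q)$ are linearly independent.

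The main obstacle is constructing $j$ over a ring where $2$ need not be invertible. The naive choice $j(v) = L_v + \iota_v$, where $\iota_v$ is the interior product against $\langle v, \cdot\rangle$, gives $(L_v + \iota_v)^2 = \langle v, v\rangle\cdot\mathrm{id} = 2q(v)\cdot\mathrm{id}$, off from the target by a factor of $2$. Chevalley's recipe avoids this by feeding $q(v)$ directly into the diagonal contraction rather than routing it through the bilinear form; the cost is that one must separately check that $j$ is independent of the chosen basis and that $j(v)^2 = q(v)\cdot \mathrm{id}$ holds for every $v \in V$, which can be done by polarizing the identity verified on basis vectors.
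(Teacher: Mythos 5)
The paper does not actually prove this theorem; it only cites \cite[Theorem IV.1.5.1]{Kn}. Your architecture --- spanning by reduction to ordered square-free monomials, then linear independence via a representation on $\Lambda(V)$ and a triangularity argument --- is exactly the standard proof that reference gives, and the spanning half and the final unitriangular change-of-basis step are fine as written.

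There is, however, a concrete gap in the construction of $j$. You correctly identify that the naive $j(v)=L_v+\iota_{\langle v,\cdot\rangle}$ gives $j(v)^2=2q(v)$, and you fix the \emph{diagonal} by making the contraction contribute $q(v_i)$ rather than $\langle v_i,v_i\rangle$ when it deletes the matching factor. But the same factor of $2$ recurs \emph{off the diagonal} and your recipe does not remove it: if the contraction attached to $v_i$ picks up $\langle v_i,v_{i_s}\rangle$ from \emph{all} remaining factors, then the operator is $e_{v_i}+\iota_{\beta_i}$ with $\beta_i(v_j)=\langle v_i,v_j\rangle$ for $j\neq i$, and the anticommutator comes out as $j(v_i)j(v_j)+j(v_j)j(v_i)=\beta_i(v_j)+\beta_j(v_i)=2\langle v_i,v_j\rangle$ instead of $\langle v_i,v_j\rangle$. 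Consequently the polarization step fails: for $x=\sum\lambda_iv_i$ one gets $j(x)^2=\sum\lambda_i^2q(v_i)+2\sum_{i<j}\lambda_i\lambda_j\langle v_i,v_j\rangle\neq q(x)\cdot\mathrm{id}$, so $j$ does not factor through $\Cl(V,q)$ when $2$ is not invertible (which is precisely the case the paper cares about, since $R$ is arbitrary). The missing idea is to split the quadratic form by a \emph{non-symmetric} bilinear form adapted to the basis: set $B(v_i,v_i)=q(v_i)$, $B(v_i,v_j)=\langle v_i,v_j\rangle$ for $i<j$, and $B(v_i,v_j)=0$ for $i>j$, so that $B(x,x)=q(x)$ and $B(x,y)+B(y,x)=\langle x,y\rangle$; then $j(v)=e_v+\iota_{B(v,\cdot)}$ satisfies $j(x)^2=q(x)\cdot\mathrm{id}$ for every $x\in V$ by the universal bilinearity of the anticommutator, with no case-checking or basis-independence worries. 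Since the contraction still strictly lowers wedge-degree, your leading-term computation $f(v_{i_1}\cdots v_{i_k})(1)=v_{i_1}\wedge\cdots\wedge v_{i_k}+(\text{lower degree})$ and the ensuing independence argument go through unchanged.
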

\noindent For a simple proof, see \cite[Theorem IV. 1.5.1]{Kn}.
\end{itemize}

\vskip 3mm

Let $Cl$ denote the Clifford algebra of the quadratic space $H(R^n) := R^n \oplus R^{n*}$, with $q(v,w) = v\cdot w^\intercal  .$  We will now give an explicit representation of $Cl \cong M_{2^n}(R)$ using what are called Suslin matrices. For a detailed exposition, see \cite{CV1}.

\vskip 5mm
 \subsection{Suslin matrices}\label{suslin}

For any two vectors $ v = (a_1, \cdots, a_n)$ and $w = (b_1, \cdots, b_n)$ in $R^n$, the Suslin matrix $\mathcal{S}_{n-1}(v,w)$ is defined as follows : 

For $n=2$, define
\[  \mathcal{S}_1(v,w) =  \begin{psmallmatrix}
a_1     &a_2    \\
-b_2    & b_1 \end{psmallmatrix} \hskip 5mm   \overline{\mathcal{S}_1(v,w)} =  \begin{psmallmatrix}
b_1    & -a_2    \\
b_2    & a_1 \end{psmallmatrix}   \]

\noindent For the general case, write $v = (a_1, v')$ and $w = (b_1, w')$ with $v', w' \in R^{n-1}.$ Then
 \[ \mathcal{S}_{n-1}(v,w) = \begin{bsmallmatrix}
a_1      &  \mathcal{S}_{n-2}(v',w') \\\\
-\overline{\mathcal{S}_{n-2}(v',w')}      &  b_1 \end{bsmallmatrix},
\hskip 5mm 
\overline{\mathcal{S}_{n-1}(v,w)} =  \begin{bsmallmatrix}
b_1      &  - \mathcal{S}_{n-2}(v',w') \\\\
\overline{\mathcal{S}_{n-2}(v',w')}      &  a_1\end{bsmallmatrix}\]
\vskip 2mm

The matrix $\mathcal{S}= \mathcal{S}_{n-1}(v,w)$ has size $2^{n-1} \times 2^{n-1}$
 and has the following properties :
 \begin{enumerate}[label={\alph*.}]
 \item $\overline{\mathcal{S}(v,w)} = \mathcal{S}(w,v)^\intercal .$
 \item $\mathcal{S}\overline{\mathcal{S}} = \overline{\mathcal{S}}\mathcal{S}= (v\cdot w^\intercal ) I_{2^{n-1}}.$
 \end{enumerate}
\vskip 2mm

\noindent In his paper \cite{S}, A. Suslin then describes a sequence of matrices $J_n \in M_{2^n}(R)$ by the recurrence formula

 \begin{equation*}
J_n = \begin{cases}
1      &\text{for $n = 0$}\\
\\
\begin{psmallmatrix}
J_{n-1}         & 0\\
0         & -J_{n-1}\end{psmallmatrix} &\text{for $n$ even}\\
\\
 \begin{psmallmatrix}
0         & J_{n-1}\\
-J_{n-1}        & 0 \end{psmallmatrix}  &\text{for $n$ odd}.
\end{cases}
\end{equation*}

\vskip 2mm
 One can check by induction that $J_nJ_n^\intercal  = 1.$ It follows that $M^* =J_nM^\intercal  J_n^\intercal$ is an involution of $M_{2^n}(R).$ Importantly, their relation to Clifford algebras comes from the following equations :

\begin{equation}\label{eq1}
 \mathcal{S}_{n-1}^*(v,w)  = 
\begin{cases}
\mathcal{S}_{n-1}(v,w) &\text{ for $n$ odd,}\\\\
\overline{\mathcal{S}_{n-1}(v,w)} &\text{ for $n$ even.}
 \end{cases}
\end{equation}
We will sometimes omit the subscripts and simply write $J, \mathcal{S}$ or $\mathcal{S}(v,w)$ if the length of the vectors are clear.   
\vskip 1mm
The map
$\phi : H(R^n) \rightarrow M_{2^n}(R)$ defined by 
$\phi(v,w) =  \bigl(\begin{smallmatrix}
0                  & \mathcal{S}_{n-1}(v,w) \\
\overline{\mathcal{S}_{n-1}(v,w)}         & 0 \end{smallmatrix}\bigr)$ 
induces an $R$-algebra homomorphism $\phi : \Cl \rightarrow M_{2^n}(R).$
In fact $\phi$ is an isomorphism (see \cite[Section 3.1]{CV1}); the elements $\phi(v,w)$ give a set of generators of the Clifford algebra.
In addition, the involution $M^* =JM^\intercal  J^\intercal $ turns out be what is called the standard involution of the Clifford algebra \cite[Theorem 4.1]{CV1}.
Note that the quadratic form is $q(v,w) = \mathcal{S}(v,w) \overline{\mathcal{S}(v,w)}.$ For $\mathcal{S}_i = \mathcal{S}(v_i, w_i)$, the corresponding bilinear form is \[\langle \mathcal{S}_1, \mathcal{S}_2 \rangle = \mathcal{S}_1\overline{\mathcal{S}_2} + \mathcal{S}_2\overline{\mathcal{S}_1} = v_1 \cdot w_2^\intercal  + v_2 \cdot w_1^\intercal .\]

\vskip 5mm 
\subsection{Properties of the basis vectors}

Let $e_i$ and $f_i$ denote the standard basis vectors of $R^n$ and $R^{n^*}$ respectively. Define \[\mathcal{E}_i = \mathcal{S}_{n-1}(e_i, 0), \hskip 2mm  \mathcal{F}_i = \mathcal{S}_{n-1}(0,f_i).\]
Notice that $\mathcal{E}_1 = \bigl(\begin{smallmatrix}
1         & 0\\
0 & 0 \end{smallmatrix}\bigr)$ and $\mathcal{F}_1 = \bigl(\begin{smallmatrix}
0         & 0\\
0 & 1 \end{smallmatrix}\bigr).$ For $i >1$ the matrices $\mathcal{E}_i, \mathcal{F}_i$ are of the form $\bigl(\begin{smallmatrix}
0         & \mathcal{X}\\
-\overline{\mathcal{X}}     & 0 \end{smallmatrix}\bigr)$ for some Suslin matrix $\mathcal{X}$ with $ \mathcal{X}\overline{\mathcal{X}} =0.$

It is easy to check that the elements $\mathcal{E}_i$, $\mathcal{F}_i$ satisfy the following elementary properties.
\vskip 3mm 
\begin{lemma}\label{properties}
Let $\mathcal{X}_k \in \{\mathcal{E}_k, \mathcal{F}_k\}$ for $1 \leq k \leq n.$ Let $i \neq 1.$ Then
\begin{enumerate}[label={\alph*.}]
\item $\mathcal{X}_1^2 = \mathcal{X}_1$ and $\mathcal{X}_1+\overline{\mathcal{X}_1} = 1$ 
\item $\overline{\mathcal{X}_i} = -\mathcal{X}_i$ and $\mathcal{X}_i^2 = 0.$
\item $\mathcal{X}_i\mathcal{X}_1 =\overline{\mathcal{X}_1} \mathcal{X}_i.$
\end{enumerate}
\end{lemma}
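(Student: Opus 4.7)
The plan is to prove each of the three identities by direct block-matrix computation, using the explicit forms of $\mathcal{E}_1, \mathcal{F}_1$ and the structural description of $\mathcal{X}_i$ for $i>1$ that is recorded in the paragraph just before the lemma. The conjugation formula from the recursive definition of $\overline{\mathcal{S}_{n-1}}$ and property (b) of Suslin matrices ($\mathcal{S}\overline{\mathcal{S}} = \overline{\mathcal{S}}\mathcal{S} = (v\cdot w^\intercal)I$) are the only external inputs needed.

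For part (a), I would read $\mathcal{E}_1 = \bigl(\begin{smallmatrix}1 & 0 \\ 0 & 0\end{smallmatrix}\bigr)$ and $\mathcal{F}_1 = \bigl(\begin{smallmatrix}0 & 0 \\ 0 & 1\end{smallmatrix}\bigr)$ directly from the formula (the off-diagonal blocks vanish because $\mathcal{S}_{n-2}(0,0) = 0$). Applying the recursion for $\overline{\mathcal{S}_{n-1}}$ with $v'=w'=0$ immediately gives $\overline{\mathcal{E}_1} = \mathcal{F}_1$ and $\overline{\mathcal{F}_1} = \mathcal{E}_1$, so $\mathcal{X}_1 + \overline{\mathcal{X}_1} = I$. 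The idempotence $\mathcal{X}_1^2 = \mathcal{X}_1$ is a one-step block computation.

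For part (b), I would invoke the description $\mathcal{X}_i = \bigl(\begin{smallmatrix}0 & \mathcal{X} \\ -\overline{\mathcal{X}} & 0\end{smallmatrix}\bigr)$ with $\mathcal{X}\overline{\mathcal{X}} = 0$, valid for $i>1$ because then the first coordinate of the defining vector is zero, so $v \cdot w^\intercal = 0$ for the smaller Suslin matrix $\mathcal{X}$. The conjugation recursion produces $\overline{\mathcal{X}_i} = \bigl(\begin{smallmatrix}0 & -\mathcal{X} \\ \overline{\mathcal{X}} & 0\end{smallmatrix}\bigr) = -\mathcal{X}_i$. Squaring the block matrix yields a block-diagonal matrix with entries $-\mathcal{X}\overline{\mathcal{X}}$ and $-\overline{\mathcal{X}}\mathcal{X}$, both of which vanish by property (b) of Suslin matrices since $v\cdot w^\intercal = 0$.

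For part (c), I would multiply the block forms in both orders. Taking $\mathcal{X}_1 = \mathcal{E}_1$, one computes
\[\mathcal{X}_i \mathcal{E}_1 = \begin{psmallmatrix}0 & \mathcal{X} \\ -\overline{\mathcal{X}} & 0\end{psmallmatrix}\begin{psmallmatrix}1 & 0 \\ 0 & 0\end{psmallmatrix} = \begin{psmallmatrix}0 & 0 \\ -\overline{\mathcal{X}} & 0\end{psmallmatrix} = \begin{psmallmatrix}0 & 0 \\ 0 & 1\end{psmallmatrix}\begin{psmallmatrix}0 & \mathcal{X} \\ -\overline{\mathcal{X}} & 0\end{psmallmatrix} = \overline{\mathcal{E}_1}\mathcal{X}_i,\]
and the analogous computation with $\mathcal{F}_1$ gives the matching block with $\mathcal{X}$ in the top-right corner. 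No conceptual obstacle arises; the only subtlety is keeping the signs in the conjugation recursion straight and correctly identifying that $\mathcal{X}\overline{\mathcal{X}}$ vanishes because the first coordinate of the relevant basis vector is zero.
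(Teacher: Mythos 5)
Your proposal is correct and is exactly the verification the paper intends: the lemma is stated with ``It is easy to check,'' and your direct block computations from the recursive definitions of $\mathcal{S}_{n-1}$ and $\overline{\mathcal{S}_{n-1}}$, together with $\mathcal{X}\overline{\mathcal{X}} = (v\cdot w^\intercal)I = 0$ for the off-diagonal blocks, supply precisely the omitted details. All three parts check out, including the independent choices of $\mathcal{X}_1 \in \{\mathcal{E}_1,\mathcal{F}_1\}$ in part (c).
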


\vskip 5mm
\begin{thm}\label{commutator}
Let $\mathcal{X}_k \in \{\mathcal{E}_k, \mathcal{F}_k\}$ for $1 \leq k \leq n.$  For any $\lambda \in R$ and $1< i,j \leq n$, we have the following commutator relations :
\[1 + \lambda \mathcal{X}_i\mathcal{X}_j = [1+ \lambda \mathcal{X}_i\mathcal{X}_1, 1 + \mathcal{X}_1\mathcal{X}_j] \] 
\end{thm}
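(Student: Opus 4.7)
The plan is to set $a = \mathcal{X}_i\mathcal{X}_1$ and $b = \mathcal{X}_1\mathcal{X}_j$, expand the group commutator $[1+\lambda a,\, 1+b] = (1+\lambda a)(1+b)(1+\lambda a)^{-1}(1+b)^{-1}$, and match the result with $1+\lambda\mathcal{X}_i\mathcal{X}_j$. First, the properties in Lemma \ref{properties} quickly yield $a^2 = 0 = b^2$: using $\mathcal{X}_1+\overline{\mathcal{X}_1} = 1$ together with part (c) one obtains $\mathcal{X}_1\mathcal{X}_i = \mathcal{X}_i - a$, so $a^2 = \mathcal{X}_i(\mathcal{X}_i - a)\mathcal{X}_1 = -\mathcal{X}_i^2\mathcal{X}_1 = 0$, and similarly $b^2 = \mathcal{X}_1\overline{\mathcal{X}_1}\mathcal{X}_j^2 = 0$ (using $\mathcal{X}_1\overline{\mathcal{X}_1} = 0$, a direct consequence of $\mathcal{X}_1^2 = \mathcal{X}_1$). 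Hence $1-\lambda a$ and $1-b$ invert $1+\lambda a$ and $1+b$ respectively, and expanding the commutator while killing every occurrence of $a^2, b^2$ gives
\[ [1+\lambda a,\, 1+b] \;=\; 1 + \lambda(ab - ba) + \lambda\, bab - \lambda^2\, aba + \lambda^2\, abab.\]

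The pivotal algebraic ingredient is anti-commutativity: $\mathcal{X}_i\mathcal{X}_j + \mathcal{X}_j\mathcal{X}_i = 0$ for $i, j > 1$. This is not one of the three items in Lemma \ref{properties}, but it can be extracted from the bilinear-form identity $\mathcal{X}_i\overline{\mathcal{X}_j} + \mathcal{X}_j\overline{\mathcal{X}_i} = \langle \mathcal{X}_i, \mathcal{X}_j\rangle\, I$ recorded in Section \ref{suslin}, together with $\overline{\mathcal{X}_k} = -\mathcal{X}_k$. The pairing vanishes because in the theorem's convention each index $k$ fixes a single $\mathcal{X}_k \in \{\mathcal{E}_k, \mathcal{F}_k\}$, so the only possible non-zero pairing $\langle \mathcal{E}_k, \mathcal{F}_k\rangle = 1$ never arises.

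With anti-commutativity in hand, the cubic and quartic remainders collapse. For $aba$, substitute $\mathcal{X}_j\mathcal{X}_i = -\mathcal{X}_i\mathcal{X}_j$ and then use $\mathcal{X}_1\mathcal{X}_i = \mathcal{X}_i - a$ to reduce every surviving term to one carrying a factor $\mathcal{X}_i^2 = 0$. For $bab$, anti-commute $\mathcal{X}_j\mathcal{X}_i$ and apply $\mathcal{X}_j\mathcal{X}_1 = \overline{\mathcal{X}_1}\mathcal{X}_j$ to bring a $\mathcal{X}_j^2 = 0$ factor together. Then $abab = a \cdot bab = 0$. It remains to identify $ab - ba$ with $\mathcal{X}_i\mathcal{X}_j$: a short check using property (c) twice together with anti-commutativity shows that $\mathcal{X}_i\mathcal{X}_j$ commutes with $\mathcal{X}_1$, so $ab = \overline{\mathcal{X}_1}\mathcal{X}_i\mathcal{X}_j = \mathcal{X}_i\mathcal{X}_j\,\overline{\mathcal{X}_1}$ and $ba = -\mathcal{X}_1\mathcal{X}_i\mathcal{X}_j\mathcal{X}_1 = -\mathcal{X}_i\mathcal{X}_j\,\mathcal{X}_1$, whence
\[ ab - ba \;=\; \mathcal{X}_i\mathcal{X}_j\,(\overline{\mathcal{X}_1} + \mathcal{X}_1) \;=\; \mathcal{X}_i\mathcal{X}_j,\]
which closes the match.

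The main obstacle is the anti-commutativity step: it is the one nontrivial algebraic fact not packaged into Lemma \ref{properties}, and the whole calculation hinges on it. Everything else is forced, careful bookkeeping with nilpotents.
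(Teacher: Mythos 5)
Your proposal is correct and follows essentially the same route as the paper: both expand the commutator using the inverses $1-\lambda \mathcal{X}_i\mathcal{X}_1$ and $1-\mathcal{X}_1\mathcal{X}_j$, kill every term in which $\mathcal{X}_i$ or $\mathcal{X}_j$ repeats via $\mathcal{X}_i^2=\mathcal{X}_j^2=0$ and the anticommutation $\mathcal{X}_i\mathcal{X}_j+\mathcal{X}_j\mathcal{X}_i=\langle\mathcal{X}_i,\mathcal{X}_j\rangle=0$, and finish with $\mathcal{X}_1+\overline{\mathcal{X}_1}=1$. You simply spell out in more detail the bookkeeping (and the source of the anticommutativity) that the paper's proof compresses into one line.
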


\begin{proof}
It follows from Lemma $\ref{properties}$ that the inverse of $1 + \lambda \mathcal{X}_i\mathcal{X}_1$ is $1 - \lambda\mathcal{X}_i\mathcal{X}_1.$ Similarly the inverse of $1 + \mathcal{X}_1\mathcal{X}_j$ is $1 - \mathcal{X}_1\mathcal{X}_j.$ 
Moreover, since $\mathcal{X}_i^2 = \mathcal{X}_j^2 = 0$ and $\mathcal{X}_i\mathcal{X}_j  + \mathcal{X}_j\mathcal{X}_i = \langle \mathcal{X}_i, \mathcal{X}_j \rangle = 0$,  
any term where $\mathcal{X}_i$ or $\mathcal{X}_j$ appears twice is zero. Thus we are left with 
\begin{equation*}
\begin{split}
[1+ \lambda \mathcal{X}_i\mathcal{X}_1, 1 + \mathcal{X}_1\mathcal{X}_j]  
&=  1  - \lambda \mathcal{X}_1\mathcal{X}_j\mathcal{X}_i\mathcal{X}_1 + \lambda \mathcal{X}_i\mathcal{X}_1\mathcal{X}_j \\
&=  1 + \lambda \mathcal{X}_i\mathcal{X}_j (\mathcal{X}_1 + \overline{\mathcal{X}_1})
\end{split}
\end{equation*}
Since $\mathcal{X}_1 + \overline{\mathcal{X}_1} = 1$ we are done.
\end{proof}

%%%
%%%%%%%
%
%%Spinnnnnnn
%
%
%%%%

\vskip 5mm
 \section{The Elementary Spin Group}
 
 \vskip 5mm 
  As stated earlier, the Clifford algebra is a $Z_2$-graded algebra $Cl = Cl_0 \oplus Cl_1.$ Under the isomorphism $\phi : Cl \cong M_{2^n}(R)$, the elements of $Cl_0$ correspond to matrices of the form 
$\bigl(\begin{smallmatrix}
g_1      &  0  \\
0        &  g_2 \end{smallmatrix}\bigr).$ 
\vskip 2mm
\noindent The Spin group is defined as 
    \[ \Spin_{2n}(R):= \{x \in Cl_0 \,| \, xx^* = 1 \text{ and } xH(R^n)x^{-1} = H(R^n)\}.\]  

\noindent  Just like we have the elementary group $E_n(R)$ corresponding to $SL_n(R)$, we have similar analogues for the orthogonal and Spin groups. 
 
 \begin{defn}
 \hfill
 \begin{enumerate}[label={\alph*.}]
\item  Let $e_{ij}$ denote the matrix with $1$ in the $(i,j)$ position and zeroes everywhere else.  For $i \neq j$, define 
 \[E_{ij}(\lambda) = 1 + \lambda e_{ij}\]
The matrices $E_{ij}(\lambda)$ are called elementary matrices and 
the group generated by $n \times n$ elementary matrices is called the elementary group $E_n(R).$
 \vskip 2mm
\item Let $\partial$ denote the permutation $(1 \ n+1)...(n \ 2n).$
We define for $1 \leq i \neq j \leq 2n$, $\lambda \in R$,
\[E^o_{ij}(\lambda) = I_{2n} + \lambda(e_{ij} - e_{\partial(j)\partial(i)}). \]
We call these the elementary orthogonal matrices
and the group generated by them is called the elementary orthogonal group $EO_{2n}(R).$
\vskip 2mm
\item From the definition of the Spin group, we have the map $\pi : Spin _{2n}(R) \rightarrow O_{2n}(R)$ given by
\[\pi(g) : (v,w) \rightarrow g\cdot (v,w) \cdot g^{-1} \text{ for $g \in \Spin_{2n}(R)$}.\]
We denote by $\Epin_{2n}(R)$ the inverse image of $EO_{2n}(R)$ under $\pi.$ 
\end{enumerate}
\end{defn}

\noindent The group $\Epin_{2n}(R)$ satisfies the following exact sequence (see \cite[p. 189]{B2})
\[ 1 \rightarrow \mu_2(R) \rightarrow \Epin_{2n}(R) \rightarrow EO_{2n}(R) \rightarrow 1\]
where $\mu_2(R) = \{x \in R : x^2 = 1 \}.$ 
\vskip 2mm 

Let $U_{2n-1}(R)$ denote the unit sphere in  $H(R^n).$
The action of $O_{2n}(R)$ on $U_{2n-1}(R)$ induces an action of $\Spin_{2n}(R)$  on  
$U_{2n-1}(R)$ (via $\pi$). Since $\pi : \Epin_{2n}(R) \rightarrow EO_{2n}(R)$ is surjective, it follows that 
 \begin{equation}\label{eq2}
 \frac{U_{2n-1}(R)}{\Epin_{2n}(R)} =  \frac{U_{2n-1}(R)}{EO_{2n}(R)},
\end{equation}

 \begin{lemma}\label{lemma1}
 There is a homomorphism $H : E_n(R) \rightarrow EO_{2n}(R)$ given by  $\varepsilon \rightarrow \left(\begin{smallmatrix}
\varepsilon     &  0 \\  
0     &  \varepsilon^{\intercal^{-1}} \end{smallmatrix}\right) \in EO_{2n}(R).$
 \end{lemma}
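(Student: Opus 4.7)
The plan is to verify the statement by splitting it into two easy tasks: first, that $\varepsilon\mapsto \bigl(\begin{smallmatrix}\varepsilon & 0 \\ 0 & \varepsilon^{\intercal^{-1}}\end{smallmatrix}\bigr)$ is a group homomorphism at all (to $GL_{2n}(R)$), and second, that on the elementary generators of $E_n(R)$ it actually lands inside $EO_{2n}(R)$. Since the image is block-diagonal, $(\alpha\beta)^{\intercal^{-1}}=\alpha^{\intercal^{-1}}\beta^{\intercal^{-1}}$ makes multiplicativity immediate. (One can also remark that this map preserves the form $q(v,w)=v\cdot w^\intercal$ and so lands in $O_{2n}(R)$, but that is not strictly needed.)

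The heart of the argument is to compute $H$ on a single Suslin-style generator $E_{ij}(\lambda)=I_n+\lambda e_{ij}$ with $i\neq j$. Since $E_{ij}(\lambda)^{\intercal}=I_n+\lambda e_{ji}$ and this is unipotent with inverse $I_n-\lambda e_{ji}$, one gets
\[
H(E_{ij}(\lambda))=\begin{pmatrix} I_n+\lambda e_{ij} & 0 \\ 0 & I_n-\lambda e_{ji}\end{pmatrix}=I_{2n}+\lambda\bigl(e_{ij}-e_{n+j,\,n+i}\bigr).
\]
The plan then is to match this with the definition $E^o_{ij}(\lambda)=I_{2n}+\lambda(e_{ij}-e_{\partial(j)\partial(i)})$: because $\partial$ is the involution $(1\;n{+}1)\cdots(n\;2n)$, for indices in the range $1\le i,j\le n$ we have $\partial(i)=n+i$ and $\partial(j)=n+j$, so the two expressions agree on the nose. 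Thus $H(E_{ij}(\lambda))=E^o_{ij}(\lambda)$.

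Since $H$ is a homomorphism on $GL_n(R)$ and sends each generator $E_{ij}(\lambda)$ of $E_n(R)$ into the set of generators of $EO_{2n}(R)$, its restriction to $E_n(R)$ factors through $EO_{2n}(R)$, which is exactly the claim. There is no real obstacle here; the only point requiring care is the indexing convention for $\partial$, which is what forces the precise form $\varepsilon^{\intercal^{-1}}$ (rather than, say, $\varepsilon^{-1}$) in the bottom block — this is also the place where the choice of hyperbolic form $q(v,w)=v\cdot w^{\intercal}$ enters. Once the indices match, the verification is purely formal.
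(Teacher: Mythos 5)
Your proposal is correct and is essentially the paper's own argument: the paper's proof consists precisely of the observation $H(E_{ij}(\lambda))=E^o_{ij}(\lambda)$, which you verify explicitly (with the index bookkeeping for $\partial$ spelled out). No further comment is needed.
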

  \begin{proof}
 The lemma follows from the observation that $H(E_{ij}(\lambda)) = E^o_{ij}(\lambda). \qedhere$
 \end{proof}

\vskip 2mm

 \subsection{Generators of $\Epin_{2n}(R)$} 
 \hfill \vskip 1mm
 
 Let $V = R^n$ with standard basis $e_1,\cdots, e_n$ and dual basis $f_1,\cdots, f_n$ for $V^*.$ We will identify $H(V)$ with the corresponding matrices in the Clifford algebra. In terms of Suslin matrices,  
\[e_i =  \begin{bsmallmatrix}
0          & \mathcal{S}_{n-1}(e_i, 0)\\
\overline{\mathcal{S}_{n-1}(e_i, 0)}    & 0 \end{bsmallmatrix}, \hskip 5mm
f_i =  \begin{bsmallmatrix}
0          & \mathcal{S}_{n-1}(0, f_i)\\
\overline{\mathcal{S}_{n-1}(0, f_i)}    & 0 \end{bsmallmatrix} \]

\noindent It can be proved 
(see \cite[Section 4.3]{B2}) that $\Epin_{2n}(R)$ is generated by elements of the form $1 + \lambda e_ie_j,  1 + \lambda e_if_j, 1 + \lambda f_if_j$ with $\lambda \in R$,
$1 \leq i,\ j \leq n$, $i \neq j.$ 
\vskip 2mm

Let $(x_k, \mathcal{X}_k) \in \{ (e_k, \mathcal{E}_k), (f_k, \mathcal{F}_k) \}.$ Then the generator $1 + \lambda x_ix_j $ corresponds to the matrix 
\[ \phi(1 + \lambda x_ix_j ) = \begin{bsmallmatrix}
1 + \lambda \mathcal{X}_i\overline{\mathcal{X}_j}     & 0 \\
0    & 1 + \lambda  \overline{\mathcal{X}_i}\mathcal{X}_j\end{bsmallmatrix}\]

\noindent Since $e_i, e_1$ are orthogonal we have $e_ie_1 = - e_1e_i.$ Similarly $f_i f_1 = -f_1f_i.$ By also taking into account the commutator relations in Theorem \ref{commutator}, we find that $\Epin_{2n}(R)$ is generated by the (smaller) set of elements of the type  
\begin{equation}\label{eq3}
1 + \lambda e_1e_i,  \hskip 2mm 1+  \lambda e_1f_i,
\hskip 2mm 1 + \lambda f_1e_i, \hskip 2mm 1+ \lambda f_1f_i.   \hskip 6mm (1<i \leq n)
\end{equation}

Since $ \overline{\mathcal{X}_i} = -  \mathcal{X}_i$ for $i >1$, these generators correspond to matrices of the form 
\[\phi(1 +\lambda x_1x_i)  = \begin{bsmallmatrix}
1 - \lambda \mathcal{X}_1\mathcal{X}_i     & 0    \\
0    & 1 -  \lambda \bar{\mathcal{X}_1}\bar{\mathcal{X}_i}\end{bsmallmatrix}.\]

%%%
%
%%%
%%%
%
%%%%%%% The action of the Epin group
%%
%
%

\vskip 2mm 
\subsection{The action of the Epin group.}\label{action} 
\hfill \vskip 1mm

So how do the generators of $\Epin_{2n}(R)$ act on the quadratic space? Let $\mathcal{X}_k \in \{\mathcal{E}_k, \mathcal{F}_k\}$ for $ 1 \leq k \leq n.$ 
\vskip 2mm
Let $g = \begin{bsmallmatrix}
1 - \lambda \mathcal{X}_1\mathcal{X}_i     & 0    \\
0    & 1 -  \lambda \bar{\mathcal{X}_1}\bar{\mathcal{X}_i}\end{bsmallmatrix}$ for some $i>1.$ Since $\overline{\mathcal{X}_i} = -  \mathcal{X}_i$ and 
 $\overline{\mathcal{X}_1}{\mathcal{X}_i} = \mathcal{X}_i\mathcal{X}_1$, we have 
\[g\begin{bsmallmatrix}
0                  & \mathcal{S}(v,w) \\
\overline{\mathcal{S}(v,w)}         & 0 \end{bsmallmatrix} g^{-1} = \begin{bsmallmatrix}
0                  & \mathcal{S}(v',w') \\
\overline{\mathcal{S}(v',w')}         & 0 \end{bsmallmatrix},\] where 
\begin{equation*}
\begin{split}
 \mathcal{S}(v',w') &= (1- \lambda \mathcal{X}_1 \mathcal{X}_i)  \cdot \mathcal{S}(v,w)  \cdot (1 + \lambda \bar{\mathcal{X}_1}\bar{\mathcal{X}_i})\\
&=  (1 - \lambda \mathcal{X}_1 \mathcal{X}_i)  \cdot \mathcal{S}(v,w)  \cdot (1 - \lambda \mathcal{X}_i\mathcal{X}_1)\end{split}
\end{equation*}

\noindent Recall that for $i >1$ the matrices $\mathcal{E}_i, \mathcal{F}_i$ are of the form $\bigl(\begin{smallmatrix}
0         & \mathcal{X}\\
-\overline{\mathcal{X}}     & 0 \end{smallmatrix}\bigr)$ for some Suslin matrix $\mathcal{X}$ with $ \mathcal{X}\overline{\mathcal{X}} =0.$
There are two cases :
 \begin{itemize} 
 \item $\mathcal{X}_1 = \mathcal{E}_1$ : Then $1 - \lambda \mathcal{E}_1\mathcal{X}_i$ and $1- \lambda \mathcal{X}_i\mathcal{E}_1$ will be equal to  $\bigl(\begin{smallmatrix}
1         & - \lambda \mathcal{X}\\
0 & 1 \end{smallmatrix}\bigr)$ and  $\bigl(\begin{smallmatrix}
1         & 0 \\
\lambda \overline{\mathcal{X} }   & 1 \end{smallmatrix}\bigr)$ respectively.
 
 \item $\mathcal{X}_1 = \mathcal{F}_1$ : Then $1 - \lambda \mathcal{F}_1\mathcal{X}_i$ and $1- \lambda \mathcal{X}_i\mathcal{F}_1$ will be equal to  
 $\bigl(\begin{smallmatrix}
1         & 0 \\
\lambda \overline{\mathcal{X} }   & 1 \end{smallmatrix}\bigr)$ and 
 $\bigl(\begin{smallmatrix}
1         &- \lambda \mathcal{X}\\
0 & 1 \end{smallmatrix}\bigr)$ respectively.
 \end{itemize}

 %%%% Lemmas
%%%%%% 

\vskip 1mm 

The next two lemmas calculate  the action of the generators $g \in \Epin_{2n}(R).$ They imply that $g (v,w) g^{-1} = ( v \sigma,  w \sigma^{\intercal^{-1}})$  for some $\sigma \in E_n(R)$ (Theorem \ref{thm1}). This plays a crucial part in showing that there is a bijection between $\Epin_{2n}(R)$ 
orbits of the unit sphere and $E_n(R)$ orbits of unimodular rows for any $n\geq 3$ (Theorem \ref{main}).

\vskip 2mm

\begin{lemma}\label{lemma2}
Let $\mathcal{X}, \mathcal{T} \in M_{2^k}(R)$ be two Suslin matrices and $ \mathcal{X\overline{X}} =0.$ Let $\mathcal{S} = \begin{psmallmatrix}
a        &  \mathcal{T}\\
-\overline{ \mathcal{T}}     & b \end{psmallmatrix}.$ 
Then 
\[\begin{psmallmatrix}
1         &  \mathcal{X}\\
0       & 1 \end{psmallmatrix} 
 \mathcal{S}
 \begin{psmallmatrix}
1         & 0\\
-\overline{\mathcal{X} }& 1 \end{psmallmatrix}
 =  \begin{bsmallmatrix}
a -  \mathcal{\langle X,T  \rangle}      &  \mathcal{T} + b\mathcal{X}\\
\\
-\overline{ \mathcal{T}} - b\overline{\mathcal{X}}     & b \end{bsmallmatrix},\]

\[\begin{psmallmatrix}
1         & 0\\
-\overline{\mathcal{X} }      & 1 \end{psmallmatrix} 
\mathcal{S}
\begin{psmallmatrix}
1         & \mathcal{X} \\
0       & 1 \end{psmallmatrix}
 =  \begin{bsmallmatrix}
a         &  \mathcal{T} + a   \mathcal{X}\\
\\
 -\overline{ \mathcal{T}} -a\overline{\mathcal{X}}   & b - \mathcal{\langle X,T  \rangle} \end{bsmallmatrix}.\]
\end{lemma}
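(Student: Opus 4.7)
The proof is a direct block-matrix computation. The two ingredients it requires are the hypothesis $\mathcal{X}\overline{\mathcal{X}} = 0$ and the bilinear-form identity $\mathcal{X}\overline{\mathcal{T}} + \mathcal{T}\overline{\mathcal{X}} = \langle \mathcal{X}, \mathcal{T}\rangle$ recorded at the end of Section \ref{suslin}. The companion identity $\overline{\mathcal{X}}\mathcal{T} + \overline{\mathcal{T}}\mathcal{X} = \langle \mathcal{X}, \mathcal{T}\rangle$, which I will need for the second equality, follows from exactly the same polarization applied to $\overline{\mathcal{S}}\mathcal{S} = (v\cdot w^\intercal)I$ in place of $\mathcal{S}\overline{\mathcal{S}}$; note that $\mathcal{X}\overline{\mathcal{X}} = 0$ also forces $\overline{\mathcal{X}}\mathcal{X} = 0$ by property (b) of Suslin matrices.

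For the first identity I would multiply in two steps. Left-multiplying $\mathcal{S}$ by $\left(\begin{smallmatrix} 1 & \mathcal{X} \\ 0 & 1 \end{smallmatrix}\right)$ affects only the top row and produces $a - \mathcal{X}\overline{\mathcal{T}}$ in position $(1,1)$ and $\mathcal{T} + b\mathcal{X}$ in position $(1,2)$. Right-multiplying by $\left(\begin{smallmatrix} 1 & 0 \\ -\overline{\mathcal{X}} & 1 \end{smallmatrix}\right)$ then alters only the first column: the $(2,1)$ entry becomes $-\overline{\mathcal{T}} - b\overline{\mathcal{X}}$, and the $(1,1)$ entry gains the extra term $-(\mathcal{T} + b\mathcal{X})\overline{\mathcal{X}}$, yielding $a - (\mathcal{X}\overline{\mathcal{T}} + \mathcal{T}\overline{\mathcal{X}}) - b\,\mathcal{X}\overline{\mathcal{X}}$. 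Invoking $\mathcal{X}\overline{\mathcal{X}} = 0$ kills the last term, and the middle term collapses to $\langle \mathcal{X}, \mathcal{T}\rangle$, delivering the claimed block form.

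The second identity is entirely symmetric. Carrying out the multiplication in the opposite order pushes the cross-term cancellation into the $(2,2)$ slot rather than the $(1,1)$ slot, and the factor $a$ (instead of $b$) now multiplies $\mathcal{X}$ in the top-right entry, giving $\mathcal{T} + a\mathcal{X}$ and its conjugate partner $-\overline{\mathcal{T}} - a\overline{\mathcal{X}}$. The only thing to watch is the non-commutativity: one must keep $\mathcal{X}$ on the correct side of $\mathcal{T}$ throughout, and invoke the companion bilinear-form identity $\overline{\mathcal{X}}\mathcal{T} + \overline{\mathcal{T}}\mathcal{X} = \langle \mathcal{X}, \mathcal{T}\rangle$ rather than the first one. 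There is no genuine obstacle here; this is a routine verification whose content is precisely that the two apparently different cross terms conspire to produce the bilinear form.
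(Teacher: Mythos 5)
Your proof is correct and follows essentially the same route as the paper: a direct block-matrix computation whose only real content is that both cross-term sums $\mathcal{X}\overline{\mathcal{T}}+\mathcal{T}\overline{\mathcal{X}}$ and $\overline{\mathcal{X}}\mathcal{T}+\overline{\mathcal{T}}\mathcal{X}$ equal $\langle\mathcal{X},\mathcal{T}\rangle$. Your polarization of $\overline{\mathcal{S}}\mathcal{S}$ versus $\mathcal{S}\overline{\mathcal{S}}$ is exactly the paper's observation (obtained there by setting $\mathcal{Y}=\mathcal{X}+\mathcal{T}$ and using $\mathcal{Y}\overline{\mathcal{Y}}=\overline{\mathcal{Y}}\mathcal{Y}$), and your explicit two-step multiplication merely fills in what the paper calls ``straightforward matrix multiplication.''
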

\begin{proof}
Let $\mathcal{Y} = \mathcal{X} + \mathcal{T}.$ Since $\mathcal{Y} \overline{\mathcal{Y}} =  \overline{\mathcal{Y}}\mathcal{Y}$, note that
\[ \mathcal{\langle X,T  \rangle =  
 X\overline{T} + T\overline{X} =
\overline{X} T + \overline{T}X =
\langle \overline{X}, \overline{T} \rangle}.\]  
The proof follows by straightforward matrix multiplication.
\end{proof}

\vskip 3mm

\begin{lemma}\label{lemma3}
Let $\mathcal{X} \in \{- \lambda \mathcal{E}_k, - \lambda \mathcal{F}_k\}$ where $1 \leq k \leq n$ and $\lambda \in R.$ Suppose $v \cdot w^\intercal  = 1$  for two vectors $v ,w \in R^{n+1}$ with $n \geq 2.$  Then
\[\begin{psmallmatrix}
1         & \mathcal{X}\\
0       & 1 \end{psmallmatrix} 
 \mathcal{S}(v,w)
 \begin{psmallmatrix}
1         & 0\\
-\overline{\mathcal{X}} & 1 \end{psmallmatrix}
= \mathcal{S}(v\varepsilon,w \varepsilon^{\intercal^{-1}})\] 

\[\begin{psmallmatrix}
1         & 0\\
-\overline{\mathcal{X} }      & 1 \end{psmallmatrix} 
\mathcal{S}(v,w)
\begin{psmallmatrix}
1         & \mathcal{X} \\
0       & 1 \end{psmallmatrix}
 =  \mathcal{S}( v \sigma,  w \sigma^{\intercal^{-1}})\]
 for some $\varepsilon, \sigma \in E_{n+1}(R).$ 
\end{lemma}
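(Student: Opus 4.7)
The plan is to apply Lemma \ref{lemma2} directly to each of the two sandwich products, identify the result as a Suslin matrix $\mathcal{S}_n(V, W)$ for some explicit pair $(V, W) \in R^{n+1}\times R^{n+1}$, and then exhibit the required $\varepsilon$ (respectively $\sigma$) in $E_{n+1}(R)$. Writing $v = (a_1, v')$, $w = (b_1, w')$ with $v', w' \in R^n$ and $\mathcal{T} = \mathcal{S}_{n-1}(v', w')$, the matrix $\mathcal{S}(v, w)$ has exactly the block form required by Lemma \ref{lemma2}. Since $\mathcal{E}_k \overline{\mathcal{E}_k} = \mathcal{F}_k \overline{\mathcal{F}_k} = 0$, the hypothesis $\mathcal{X}\overline{\mathcal{X}} = 0$ of Lemma \ref{lemma2} is met for $\mathcal{X} \in \{-\lambda \mathcal{E}_k, -\lambda \mathcal{F}_k\}$. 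Using the linearity of $\mathcal{S}_{n-1}$ in its two arguments together with the formula $\langle \mathcal{S}(u_1, u_2), \mathcal{S}(v_1, v_2) \rangle = u_1 v_2^\intercal + v_1 u_2^\intercal$, I would read off the new pair $(V, W)$ explicitly in each of the four cases.

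Two of the four cases reduce to standard elementary matrices: when $\mathcal{X} = -\lambda \mathcal{F}_k$ in the first sandwich, one gets $V = v\, E_{k+1, 1}(\lambda)$ and $W = w\, E_{k+1, 1}(\lambda)^{\intercal^{-1}}$, so $\varepsilon = E_{k+1, 1}(\lambda)$ does the job; symmetrically, $\mathcal{X} = -\lambda \mathcal{E}_k$ in the second sandwich gives $\sigma = E_{1, k+1}(-\lambda)$. In the remaining two ``Suslin-type'' cases the pair $(V, W)$ mixes entries of $v$ and $w$: for $\mathcal{X} = -\lambda \mathcal{E}_k$ in the first sandwich one obtains $W = w$ and $V = v + \lambda(w_{k+1} e_1 - w_1 e_{k+1})$, and dually for $\mathcal{X} = -\lambda \mathcal{F}_k$ in the second sandwich. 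The natural candidate in the first of these is the transvection
\[\varepsilon = I_{n+1} + \lambda\, w^\intercal (w_{k+1} e_1 - w_1 e_{k+1}),\]
and direct computation using $v w^\intercal = 1$ together with the orthogonality $(w_{k+1} e_1 - w_1 e_{k+1}) w^\intercal = 0$ confirms $v\varepsilon = V$ and $w\varepsilon^{\intercal^{-1}} = w$. The dual case is handled by $\sigma = I_{n+1} + \lambda\, v^\intercal (v_{k+1} e_1 - v_1 e_{k+1})$.

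The main obstacle is showing that these Suslin-type transvections actually lie in $E_{n+1}(R)$, since a priori they only sit in $SL_{n+1}(R)$. This will follow from the classical result, going back to Suslin \cite{S}, that for $m \geq 3$ any matrix of the form $I_m + u^\intercal \alpha$ with $u \in R^m$ unimodular and $\alpha u^\intercal = 0$ lies in the elementary subgroup $E_m(R)$. In our setting the unimodularity of $w$ (and dually of $v$) comes for free from $v w^\intercal = 1$, and the relevant row $\alpha$ is manifestly orthogonal to the chosen unimodular vector. The hypothesis $n \geq 2$ is imposed precisely to guarantee $m = n+1 \geq 3$, which is the range where the Suslin transvection is guaranteed to be elementary, completing the proof.
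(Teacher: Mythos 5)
Your proposal is correct and follows essentially the same route as the paper: apply Lemma \ref{lemma2} to read off the new pair $(V,W)$, note that two of the four cases are realized by explicit elementary matrices ($E_{k+1,1}(\lambda)$ and $E_{1,k+1}(-\lambda)$), and handle the remaining two via the transvection $I_{n+1}+w^\intercal(V-v)$ (resp.\ its dual), which lies in $E_{n+1}(R)$ by Suslin's result \cite[Corollary 2.7]{S} since $n+1\geq 3$. The only difference is cosmetic: the paper works out the $\mathcal{E}_k$ case and declares the $\mathcal{F}_k$ case "similar," while you delineate all four cases explicitly.
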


\begin{proof}
Let $\mathcal{X} = - \lambda \mathcal{E}_k.$ The proof is similar in the other case.
Write $v = (a_0,\cdots, a_n)$ and $w = (b_0, \cdots, b_n).$ 
\vskip 1mm
\noindent  From Lemma~\ref{lemma2}, we have
$\begin{psmallmatrix}
1         & \mathcal{X}\\
0       & 1 \end{psmallmatrix} 
 \mathcal{S}(v,w)
 \begin{psmallmatrix}
1         & 0\\
-\overline{\mathcal{X}} & 1 \end{psmallmatrix}
= \mathcal{S}(v', w') $,
where 
\[
\begin{bmatrix}
v'\\
w' \end{bmatrix} = 
\begin{bmatrix}
(a_0 +\lambda b_k, \cdots,a_k -\lambda b_0, \cdots ,a_n)  \\
w\end{bmatrix}.
\]
Since $v' \cdot w^\intercal  = v \cdot w^\intercal = 1$, it follows from \cite[Corollary 2.7]{S} that the matrices
\[\varepsilon = I_n + w^\intercal (v' - v) , \hskip 4mm (\varepsilon^\intercal )^{-1} = I_n - (v' - v)^\intercal  w\]
are in $E_n(R).$ We have   
$(v \varepsilon, w  \varepsilon^{\intercal^{-1}}) = (v' ,w).$ (The hypothesis $n \geq 2$ is needed to use \cite[Corollary 2.7]{S} where the length of the unimodular rows must be at least $3$).
\vskip 1mm 

For the second part, we use Lemma \ref{lemma2} and get
$\begin{psmallmatrix}
1         & 0\\
-\overline{\mathcal{X} }      & 1 \end{psmallmatrix} 
\mathcal{S}(v,w)
\begin{psmallmatrix}
1         & \mathcal{X} \\
0       & 1 \end{psmallmatrix}
 = \mathcal{S}(v'', w''), $
 where
\[
\begin{bmatrix}
v''\\
w'' \end{bmatrix} = 
\begin{bmatrix}
(a_0, \cdots,a_k - \lambda a_0, \cdots ,a_n)  \\
(b_0 +\lambda b_k, \cdots, b_n)\end{bmatrix}.
\]
Clearly $ (v'', w'') =
( v \sigma,  w \sigma^{\intercal^{-1}})$
where $\sigma = E_{1, k+1}(-\lambda).$
\end{proof}

%%
%%

%
%%%%%%%%%%
%%%%%%%%% Main Result : BIJECTION
%%%%%%

\vskip 5mm
 \section{The bijection between $\Epin_{2n}(R)$ and $E_n(R)$ orbits}

We are now ready to prove the bijection between $E_n(R)$-orbits of unimodular rows and $\Epin_{2n}(R)$-orbits on the unit sphere in $H(R^n) = R^n \oplus {R^n}^*.$ We will break it down into simple parts with each part explaining one aspect of the bijection.

\begin{thm}\label{thm1}
Let $n\geq 3.$ Let $q(v,w) =1$ and 
and $g \in \Epin_{2n}(R).$ Then \[g (v,w) g^{-1} = ( v \sigma,  w \sigma^{\intercal^{-1}})\]  for some $\sigma \in E_n(R).$ 
\end{thm}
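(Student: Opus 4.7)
The plan is to reduce the statement to a single generator of $\Epin_{2n}(R)$ and then invoke Lemma~\ref{lemma3}. By (\ref{eq3}), $\Epin_{2n}(R)$ is generated by elements of the form $1 + \lambda x_1 x_i$, where $x_1 \in \{e_1, f_1\}$, $x_i \in \{e_i, f_i\}$, $\lambda \in R$, and $1 < i \leq n$. If the claim holds for each such generator, then for an arbitrary product $g = g_1 g_2 \cdots g_k$ a straightforward induction yields $g(v,w)g^{-1} = (v\sigma, w\sigma^{\intercal^{-1}})$ with $\sigma = \sigma_1 \sigma_2 \cdots \sigma_k \in E_n(R)$. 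The induction goes through because the condition $q=1$ is preserved at every intermediate step: $v\sigma_j(w\sigma_j^{\intercal^{-1}})^\intercal = v\sigma_j \sigma_j^{-1} w^\intercal = 1$.

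For a single generator $1 + \lambda x_1 x_i$, the explicit computation in Section~\ref{action} shows that its conjugation action on the block matrix $\phi(v,w)$ descends to conjugation of the Suslin matrix $\mathcal{S}(v,w)$ by a pair of unipotent block matrices of precisely the shape
\[ \begin{psmallmatrix} 1 & \mathcal{Y} \\ 0 & 1 \end{psmallmatrix} \quad\text{and}\quad \begin{psmallmatrix} 1 & 0 \\ -\overline{\mathcal{Y}} & 1 \end{psmallmatrix}, \]
in one order or the other according to whether $x_1 = e_1$ or $x_1 = f_1$, with $\mathcal{Y} \in \{-\lambda \mathcal{E}_i, -\lambda \mathcal{F}_i\}$. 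These are exactly the matrices that Lemma~\ref{lemma3} is designed to handle. Since $n \geq 3$, the length hypothesis of the lemma is met, and it produces $\varepsilon \in E_n(R)$ with the conjugated Suslin matrix equal to $\mathcal{S}(v\varepsilon, w\varepsilon^{\intercal^{-1}})$.

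To finish, I would observe that the map $(v,w) \mapsto \mathcal{S}(v,w)$ is injective, since the recursive definition in Section~\ref{suslin} lets one read off the first coordinates of $v$ and $w$ from the top-left and bottom-right corners of $\mathcal{S}$ and then recurse on the inner block. Hence the equality of Suslin matrices forces $(v',w') = (v\varepsilon, w\varepsilon^{\intercal^{-1}})$, completing the generator case and therefore the theorem.

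The main obstacle has, in effect, already been absorbed into Section~\ref{action} and Lemma~\ref{lemma3}; what remains is bookkeeping. The only subtle points are (a) verifying that \emph{both} subcases $x_1 = e_1$ and $x_1 = f_1$ produce conjugating matrices of the form handled by Lemma~\ref{lemma3}, and (b) tracking that the dimension restriction $n \geq 3$ is used precisely to invoke \cite[Corollary~2.7]{S} inside the proof of that lemma -- this is the place where the hypothesis cannot be weakened.
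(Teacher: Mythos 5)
Your proposal is correct and follows essentially the same route as the paper: reduce to the generators in (\ref{eq3}), use the block computation of Section~\ref{action} to turn conjugation into the unipotent sandwiching handled by Lemma~\ref{lemma3}, and conclude. The extra bookkeeping you supply (the induction over a product of generators, the preservation of $q=1$ at each intermediate step so that Lemma~\ref{lemma3} remains applicable, and the injectivity of $(v,w)\mapsto\mathcal{S}(v,w)$) is exactly what the paper leaves implicit.
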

\begin{proof}
It is enough to prove the theorem for the generators of $\Epin_{2n}(R)$ (given in Section 3.3) and this was done in Section \ref{action} and Lemma \ref{lemma3}.
\end{proof}

\vskip 5mm

\begin{rmk}
There are two papers in the literature which prove some variation of the above theorem, though neither of them discuss Spin groups. The special case $n=3$ was considered in the proof of \cite[Corollary 7.4]{SV}, and an alternate approach can be found in \cite[Lemma 3.2]{JR1}. Both the papers study different group structures and connect them to the elementary-group actions on unimodular rows. In Part B, we will analyze the case $n=3$ and use it to explain the Vaserstein symbol.
\end{rmk}
\vskip 2mm

\begin{thm} \label{thm2}
Let  $n \geq 3.$ If $q(v, w_1) = q(v, w_2) = 1$, then $(v,w_1)$ and $(v,w_2)$ are in the same $EO_{2n}(R)$ and $\Epin_{2n}(R)$ orbits.
\end{thm}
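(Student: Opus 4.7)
My plan is to reduce to constructing a single element of $\Epin_{2n}(R)$ sending $(v,w_1)$ to $(v,w_2)$, and build it out of generators that fix $v$ while translating $w$ by ``Pl\"ucker vectors.'' By (\ref{eq2}) the $EO_{2n}(R)$- and $\Epin_{2n}(R)$-orbits on $U_{2n-1}(R)$ coincide, so it is enough to produce such a $g\in\Epin_{2n}(R)$. Writing $v=(a_1,\ldots,a_n)$ and $u := w_2-w_1$, the unit-sphere hypotheses force $v\cdot u^{\intercal}=0$, i.e.\ $u\in\ker P_v$. The strategy has two halves: (i) write $u$ as an $R$-linear combination of the Pl\"ucker vectors $\pi_{ij} := a_je_i - a_ie_j$ ($i\neq j$), and (ii) realise each translation $w\mapsto w+\lambda\pi_{ij}$ (with $v$ held fixed) as the action of one explicit generator of $\Epin_{2n}(R)$.

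For (i), by unimodularity of $v$ choose $(b_1,\ldots,b_n)$ with $\sum a_ib_i=1$. A direct computation using $\sum a_ib_i=1$ and $\sum a_iu_i=0$ (gathering the coefficient of each $e_k$) yields the Koszul-type identity
\[
u \;=\; \sum_{i<j}(u_ib_j - u_jb_i)\,\pi_{ij},
\]
which exhibits $\ker P_v$ as the submodule generated by the Pl\"ucker vectors $\{\pi_{ij}\}$. This identity is a short algebraic manipulation and holds for all $n$.

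For (ii), set $h_{ij}(\lambda) := 1+\lambda f_if_j$ for $i\neq j$. When $1\in\{i,j\}$, this element (or its inverse, using $f_if_1=-f_1f_i$) is already in the generating set (\ref{eq3}); when $1<i,j\le n$, Theorem~\ref{commutator} expresses $h_{ij}(\lambda)$ as the commutator $[1+\lambda f_if_1,\,1+f_1f_j]$. Hence $h_{ij}(\lambda)\in\Epin_{2n}(R)$ in all cases. Using the Clifford relations $f_k^2=0$, $f_kf_\ell=-f_\ell f_k$ ($k\neq\ell$), and $f_je_k+e_kf_j=\delta_{jk}$, a short computation gives the conjugation formulas
\[
h_{ij}(\lambda)\,e_k\,h_{ij}(\lambda)^{-1} = e_k+\lambda(\delta_{jk}f_i-\delta_{ik}f_j),\qquad h_{ij}(\lambda)\,f_k\,h_{ij}(\lambda)^{-1}=f_k,
\]
which translate to the action $(v,w)\mapsto(v,\,w+\lambda\pi_{ij})$ on $H(R^n)$. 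Setting $\lambda_{ij} := u_ib_j - u_jb_i$ and composing $h_{ij}(\lambda_{ij})$ over all $i<j$ in any order (each step preserves $v$, so $\pi_{ij}$ does not change between applications) produces $g\in\Epin_{2n}(R)$ sending $(v,w_1)$ to $(v,w_1+u)=(v,w_2)$. The only place where any real work is needed is the verification of the two conjugation formulas for $h_{ij}(\lambda)$; the Koszul decomposition and the final composition are routine bookkeeping.
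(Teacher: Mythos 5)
Your argument is correct in substance but follows a genuinely different route from the paper. The paper's proof is a two-line affair: since $v\cdot w_1^\intercal = v\cdot w_2^\intercal = 1$, Suslin's result \cite[Corollary 2.7]{S} gives $\varepsilon := I_n + v^\intercal(w_2-w_1)\in E_n(R)$ outright, and the hyperbolic embedding $H$ of Lemma \ref{lemma1} then yields an element of $EO_{2n}(R)$ fixing $v$ and carrying $w_1$ to $w_2$. You instead build the required element inside $\Epin_{2n}(R)$ from scratch: the Koszul decomposition $u=\sum_{i<j}(u_ib_j-u_jb_i)\pi_{ij}$ of $u=w_2-w_1\in\ker P_v$ (which I have checked), combined with your conjugation formulas for $1+\lambda f_if_j$ (which are also correct), amounts to a hands-on proof of the Eichler-transvection factorization that \cite[Corollary 2.7]{S} encapsulates. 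What this buys: a self-contained argument with an explicit word in the Epin generators, and one that nowhere uses $n\geq 3$. What it costs: length, and the Clifford computations.

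One misstep needs repair: your appeal to Theorem \ref{commutator} to place $h_{ij}(\lambda)=1+\lambda f_if_j$ (for $1<i,j$) in $\Epin_{2n}(R)$ does not work as stated. That theorem concerns the block matrices $\mathcal{F}_k$, which satisfy $\mathcal{F}_1^2=\mathcal{F}_1$ and $\mathcal{F}_1+\overline{\mathcal{F}_1}=1$; the Clifford generators $f_k$ instead satisfy $f_1^2=0$, and then $(\lambda f_if_1)(f_1f_j)=(f_1f_j)(\lambda f_if_1)=0$, so $1+\lambda f_if_1$ and $1+f_1f_j$ commute and $[1+\lambda f_if_1,\,1+f_1f_j]=1$ in the Clifford algebra. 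Fortunately the step is unnecessary: the generating set of $\Epin_{2n}(R)$ quoted in the paper from \cite[Section 4.3]{B2}, just before the reduction to (\ref{eq3}), already contains $1+\lambda f_if_j$ for all $i\neq j$, so membership is immediate and the rest of your argument goes through unchanged.
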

\begin{proof}
By our hypothesis, we have $v\cdot w_1^\intercal  =  v\cdot w_2^\intercal = 1.$ Then it follows, from  \cite[Corollary 2.7]{S}, that the matrix
$$\varepsilon := I_n + v^\intercal (w_2 - w_1) \in E_n(R).$$  Since $w_1 \cdot \varepsilon = w_2$, both $w_1, w_2$ lie in the same $E_n(R)$ orbit.

\noindent By Lemma \ref{lemma1}, we have $H :  \varepsilon^{\intercal^{-1}} \rightarrow \left(\begin{smallmatrix}
\varepsilon^{\intercal^{-1}}     &  0 \\  
0     &  \varepsilon \end{smallmatrix}\right) \in EO_{2n}(R).$ Since $\varepsilon^{\intercal ^{-1}} = I_n - (w_2 - w_1)^\intercal  v$, it is easy to check that 
\[w_1 \varepsilon = w_2,\] 
\[v \varepsilon^{\intercal ^{-1}}  = v.\]

\noindent Therefore  $(v, w_1)$ and $(v,w_2)$ lie in same $EO_{2n}(R)$ orbit, and so by Equation (\ref{eq2}) in Section 3, they lie on the same $\Epin_{2n}(R)$ orbit.
\end{proof}

\vskip 2mm

Let $U_{2n-1}(R)$ be the unit sphere in $H(R^n).$ By the above theorem, the map $Um_n(R) \rightarrow \frac{U_{2n-1}(R)}{\Epin_{2n}(R)} $ given by $v \rightarrow (v,w)$ is well defined. 

\vskip 3mm

\begin{thm}\label{main}
Let $(v_1, w_1) , (v_2,w_2)$ be two points on the unit sphere $U_{2n-1}(R)$, where $n\geq 3.$ Then $(v_1,w_1) \widesim{\Epin_{2n}(R)}  (v_2,w_2)$ if and only if $v_1 \widesim{E_n(R)} v_2.$ 
\vskip 1mm
In other words, there is a bijection between the sets (of orbits) 
\[\frac{Um_n(R)}{E_n(R)} \longleftrightarrow \frac{U_{2n-1}(R)}{\Epin_{2n}(R)} =  \frac{U_{2n-1}(R)}{EO_{2n}(R)}. \qedhere \]
\end{thm}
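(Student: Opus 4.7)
The plan is to combine the three earlier results --- Theorem \ref{thm1}, Theorem \ref{thm2}, and Lemma \ref{lemma1} --- into the two implications of the equivalence, and then package them as the claimed bijection. Both directions are essentially immediate applications of the preceding work; the interesting content is really located in those earlier theorems.

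For the forward direction, suppose $g \in \Epin_{2n}(R)$ satisfies $g(v_1, w_1)g^{-1} = (v_2, w_2)$. By Theorem \ref{thm1}, conjugation by $g$ acts on the first coordinate by some $\sigma \in E_n(R)$, so $v_2 = v_1 \sigma$ and hence $v_1 \widesim{E_n(R)} v_2$. For the reverse direction, suppose $v_2 = v_1 \varepsilon$ with $\varepsilon \in E_n(R)$. By Lemma \ref{lemma1}, the matrix $H(\varepsilon) = \left(\begin{smallmatrix} \varepsilon & 0 \\ 0 & \varepsilon^{\intercal^{-1}} \end{smallmatrix}\right)$ lies in $EO_{2n}(R)$ and sends $(v_1, w_1)$ to $(v_2, w_1 \varepsilon^{\intercal^{-1}})$. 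Both $w_1\varepsilon^{\intercal^{-1}}$ and $w_2$ pair with $v_2$ to give $1$, so Theorem \ref{thm2} places $(v_2, w_1 \varepsilon^{\intercal^{-1}})$ and $(v_2, w_2)$ in the same $EO_{2n}(R)$-orbit, hence the same $\Epin_{2n}(R)$-orbit by Equation (\ref{eq2}). Chaining the two steps yields $(v_1, w_1) \widesim{\Epin_{2n}(R)} (v_2, w_2)$.

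For the bijection itself, send the $E_n(R)$-orbit of $v$ to the $\Epin_{2n}(R)$-orbit of $(v, w)$ for any completion $w$ with $v \cdot w^\intercal = 1$. Well-definedness in the choice of $w$ is exactly Theorem \ref{thm2}; well-definedness on $E_n(R)$-orbits of $v$ is the reverse direction above; injectivity is the forward direction; and surjectivity is immediate because every point of $U_{2n-1}(R)$ has the form $(v, w)$ with $v \in Um_n(R)$. The identification with $\frac{U_{2n-1}(R)}{EO_{2n}(R)}$ is Equation (\ref{eq2}).

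The conceptual obstacle --- that the Spin-side conjugation collapses on the first coordinate to an elementary action on the unimodular row, and that any two completions of a given $v$ can already be swapped inside an $EO_{2n}(R)$-orbit --- has been fully absorbed into Theorems \ref{thm1} and \ref{thm2}. What remains here is only the bookkeeping of verifying that the forward and reverse maps on orbit sets are mutual inverses, which is immediate from the two implications. The hypothesis $n \geq 3$ enters only through those earlier results (ultimately, through the use of \cite[Corollary 2.7]{S} which requires rows of length at least $3$).
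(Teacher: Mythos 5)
Your proof is correct and follows essentially the same route as the paper: the forward implication via Theorem \ref{thm1}, the reverse via the embedding of Lemma \ref{lemma1} together with Theorem \ref{thm2} and Equation (\ref{eq2}), then packaging as a bijection. You are in fact slightly more careful than the paper in attributing the first step of the reverse direction to Lemma \ref{lemma1} rather than folding it into Theorem \ref{thm2}, and in spelling out well-definedness and surjectivity.
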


\begin{proof}
 Suppose for the two unimodular rows $v_1, v_2$, we have $v_1 \cdot \varepsilon = v_2$ for some $\varepsilon \in E_n(R).$ 
 Then Theorem \ref{thm2} implies that
 \[ (v_1,w_1) \widesim{\Epin_{2n}(R)} (v_1 \cdot \varepsilon ,w_1 \cdot \varepsilon^{\intercal^{-1}}) \widesim{\Epin_{2n}(R)} (v_2, w_2).\] 
 
 \noindent  On the other hand, suppose $(v_1,w_1) \widesim{\Epin_{2n}(R)} (v_2,w_2).$ Then Theorem $\ref{thm1}$ implies that $v_1 \widesim{E_n(R)} v_2.$ 
Therefore we have a bijection
$\frac{Um_n(R)}{E_n(R)} \longleftrightarrow \frac{U_{2n-1}(R)}{\Epin_{2n}(R)}.\qedhere$
\end{proof}

\begin{cor}
Let $(v_1, w_1) , (v_2,w_2)$ be two points on the unit sphere $U_{2n-1}(R)$, where $n\geq 3.$ Then $(v_1,w_1) \widesim{EO_{2n}(R)}  (v_2,w_2)$ if and only if $v_1 \widesim{E_n(R)} v_2.$ 
\end{cor}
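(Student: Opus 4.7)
The plan is to deduce the Corollary as an immediate consequence of Theorem \ref{main}, using Equation (\ref{eq2}), which records the fact that $\Epin_{2n}(R)$ and $EO_{2n}(R)$ produce the same orbits on $U_{2n-1}(R)$. So the whole content is already hidden in the statement of Theorem \ref{main}; I just need to extract the $EO_{2n}(R)$-version cleanly.

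First I would observe that $\Spin_{2n}(R)$ acts on $H(R^n)$ via conjugation, and this action factors as $g \cdot (v,w) = \pi(g) \cdot (v,w)$ through the orthogonal representation $\pi : \Spin_{2n}(R) \to O_{2n}(R)$. The group $\Epin_{2n}(R)$ was defined precisely as $\pi^{-1}(EO_{2n}(R))$, so the restriction $\pi : \Epin_{2n}(R) \to EO_{2n}(R)$ is surjective. Consequently, two unit vectors $(v_1,w_1), (v_2,w_2) \in U_{2n-1}(R)$ lie in the same $\Epin_{2n}(R)$-orbit if and only if they lie in the same $EO_{2n}(R)$-orbit: any $g \in \Epin_{2n}(R)$ carrying one to the other produces $\pi(g) \in EO_{2n}(R)$ doing the same, and conversely any $h \in EO_{2n}(R)$ lifts through $\pi$ to some $g$. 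This is exactly Equation (\ref{eq2}).

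Combining this identification of orbits with Theorem \ref{main}, the chain of equivalences
\[ (v_1,w_1) \widesim{EO_{2n}(R)} (v_2,w_2) \iff (v_1,w_1) \widesim{\Epin_{2n}(R)} (v_2,w_2) \iff v_1 \widesim{E_n(R)} v_2 \]
proves the Corollary. There is no serious obstacle here; the only subtlety worth flagging in the write-up is to make sure the reader sees that the $\Epin_{2n}(R)$-action on $U_{2n-1}(R)$ is, by construction, nothing other than the pullback of the $EO_{2n}(R)$-action along $\pi$, so the identification of orbit spaces is literal and not merely a bijection.
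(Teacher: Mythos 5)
Your proposal is correct and follows exactly the route the paper intends: the corollary is an immediate consequence of Theorem \ref{main} together with Equation (\ref{eq2}), which identifies the $\Epin_{2n}(R)$- and $EO_{2n}(R)$-orbits on $U_{2n-1}(R)$ via the surjection $\pi : \Epin_{2n}(R) \to EO_{2n}(R)$. The paper leaves this deduction implicit, and your write-up supplies precisely the missing one-line argument.
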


The above bijection says that for any $g \in \Epin_{2n}(R)$ and a point $(v,w)$ on the unit sphere, \[g (v,w) g^{-1} =( v \sigma,  w \sigma^{\intercal^{-1}})\]
for some $\sigma \in E_n(R).$ Here, the element $\sigma \in E_n(R)$ may vary with the choice of $(v,w).$ It should be stressed that the above bijection does not imply that the groups $E_n(R)$ 
and $\Epin_{2n}(R)$ are isomorphic. Only the corresponding orbit spaces are in bijection.

%%%
%%%%

 % Vasestein Symbol  
 %%%%%  Part2!!!!!!
%%
%%% 
 %%%%

 \vskip 5mm
{\centering \Large \part{ Interpreting the Vaserstein symbol using Spin groups}}
 \vskip 5mm 
 In this part, we will focus on the case $n=3$ and examine the Vaserstein symbol.
 \vskip 5mm
\section{The Vaserstein symbol}

A matrix $A$ is said to be \emph{alternating} if $A^{\intercal} = -A$ and the diagonal elements of $A$ are all zero. It is well known that there exists a polynomial $pf$ called the Pfaffian (in the matrix elements with integral coefficients, depending only on the size of the matrix) such that $\det(A) = (pf(A))^2$ for all alternating matrices $A$ (see \cite[Ch. XV, \S 9]{L}). 

\begin{defn}
(\cite[p. 945]{SV}) The elementary symplectic-Witt group $W_E(R)$ is an abelian group consisting of (equivalent classes of) alternating matrices with Pfaffian $1.$
For alternating matrices $\alpha_r \in M_{r}(R)$ their sum is  defined as \[\alpha_r \perp \alpha_s : = \begin{psmallmatrix}
                 \alpha_r & 0\\
                 0 & \alpha_s\\
                \end{psmallmatrix} \in M_{r+s}(R).\]
 The identity element is $\psi_{r} = \psi_{r-1}\perp 
                \psi_{1}$ where $\psi_{1} = \begin{psmallmatrix}
                 0& 1\\
                -1 & 0\\
                \end{psmallmatrix} .$           
Two matrices $\alpha_r, \alpha_s$ are said to be equivalent if $\alpha_r \perp \psi_{s+l} = \varepsilon
(\alpha_s \perp \psi_{r+l})\varepsilon^\intercal ,$ for some $l \geq 0$ and  $\varepsilon \in  E(R).$
\end{defn}

The Vaserstein symbol is a map $\frac{Um_3(R)}{E_3(R)} \rightarrow W_E(R)$, giving a symplectic structure on orbits of unimodular rows. This is done by identifying a point on the unit sphere $(v,w)\in H(R^3)$ with a $4 \times 4$ alternating matrix.  Here we will use Suslin matrices which helps us to see the connection to Clifford algebras and Spin groups.

Let  $v=(a_1,a_2,a_3)$ and $w =(b_1,b_2,b_3).$ Recall from Section \ref{suslin} that 
\[ \mathcal{S}_2(v,w) = \begin{psmallmatrix}
a_1 & 0 & a_2 & a_3\\
0   & a_1 & -b_3 & b_2  \\
-b_2 & a_3 & b_1 & 0 \\
-b_3 & -a_2 & 0  & b_1 \end{psmallmatrix},
\hskip 4mm     J_2  = \begin{psmallmatrix}
0 & 1 &  0 & 0\\
-1  & 0 &  0& 0  \\
0 & 0 & 0 & -1 \\
0 & 0 & 1  & 0 \end{psmallmatrix}\]
and
$J\mathcal{S}^{\intercal} J^{\intercal} =  \mathcal{S}$ (from Equation (\ref{eq1})). Since $J^{-1} = J^{\intercal}= -J$, this can be rewritten as 
\[ ( \mathcal{S}J)^{\intercal} = - \mathcal{S}J.\]

\vskip 2mm

We need one more matrix 
$\beta =\begin{psmallmatrix}
1 & 0 &  0 & 0\\
0  & -1 &  0& 0  \\
0 & 0 & 0 & 1 \\
0 & 0 & -1  & 0 \end{psmallmatrix}.$
Note that $\beta^{\intercal} = \beta^{-1}.$

Define
 \[ V(v,w) :=   \beta\mathcal{S}_2(v,w) J_2 \beta^{\intercal}=  \begin{pmatrix}
0 & -a_1 & -a_2 & -a_3\\
a_1   & 0 & -b_3 & b_2  \\
a_2 & b_3 & 0 & -b_1 \\
a_3 & -b_2 & b_1  & 0 \end{pmatrix}.\] 
The matrix $V(v,w)$ is alternating with Pfaffian $v\cdot w^{\intercal} = a_1b_1 + a_2b_2 + a_3b_3.$ Thus when $v\cdot w^{\intercal} =1 $, the matrix $V(v,w)$ represents an element of $W_E(R).$ It is the same matrix corresponding to the Vaserstein symbol (in \cite[\S 5]{SV}).  In the next section we will break down the Vaserstein symbol into two parts and interpret it using Spin groups : 

\begin{enumerate}[label = {\alph*.}]
\item Let $A_4(R)$ denote the set of $4 \times 4$ alternating matrices with Pfaffian $1.$ First, we will show that there is a bijection 
$\frac{Um_3(R)}{E_3(R)} \leftrightarrow \frac{A_4(R)}{E_4(R)}.$
As we will see, this follows from the isomorphism $\Epin_6{R} \cong E_4(R)$ and then utilizing the results from Part A to get
\[\frac{Um_3(R)}{E_3(R)} \leftrightarrow \frac{U_5(R)}{\Epin_6(R)} \leftrightarrow \frac{A_4(R)}{E_4(R)}.\]
\item Then the obvious inclusion map gives us $ A_4(R) \rightarrow W_E(R)$, thus revealing the Witt-group structure on orbits of unimodular rows. 
\end{enumerate}

\begin{rmk}
The Vaserstein symbol was introduced in \cite[\S 5]{SV} to study orbits of unimodular rows. Suslin and Vaserstein studied the injectivity and surjectivity of the Vaserstein symbol and proved that it is a bijection if $R$ is a commutative noetherian ring of Krull dimension two (see \cite[Corollary 7.4]{SV}). The recent paper \cite{GRK} gives a survey of the non-injectivity of the Vaserstein symbol in dimension three.

\vskip 1mm
Aravind Asok and Jean Fasel have provided an interpretation of the Vaserstein symbol using $\mathbb{A}^1$-homotopy theory. The paper \cite{F2} explains this connection in detail (also see \cite[Theorem 4.3.1]{AF2}). Following \cite{F2},  let $k$ be a perfect field and $Q_5$ be the smooth affine quadric with $k[Q_5] = k[x_1, x_2,x_3, y_1, y_2, y_3] /\langle \sum x_iy_i -1\rangle.$ Let $[X,Y]_{\mathbb{A}^1}$ denote the set of all morphisms from $X$ to $Y$ in the unstable $\mathbb{A}^1$-homotopy category $\mathcal{H}(k)$ as defined by Morel-Voevodsky.

For any smooth affine $k$-scheme $X = Spec (R)$, there is a natural bijection
 $[X, Q_5]_{\mathbb{A}^1} = [X, \mathbb{A}^3 \setminus 0]_{\mathbb{A}^1} = Um_3(R)/E_3(R).$ Moreover $Q_5$ is isomorphic to the quotient of algebraic varieties $SL_4/Sp_4$ giving us the composite map $Q_5 \rightarrow SL_4/Sp_4 \rightarrow SL/Sp.$ It turns out that the quotient $SL/Sp$ represents the (reduced) higher Grothendieck-Witt group $GW_{1,red}^3(X)$ which coincides with $W_E(R)$ for any smooth affine variety $X = Spec (R).$ Thus one has the following interpretation of the Vaserstein symbol 
 \[    Um_3(R)/E_3(R) =    [X, Q_5]_{\mathbb{A}^1} \rightarrow [X, SL/Sp]_{\mathbb{A}^1}  =W_E(R). \] 
\end{rmk}

%%%%%%
%%%%%%%%%%55
%%%
%%   Dictionary 
%
%%
%

\vskip 5mm
\section{The dictionary between Vaserstein and Suslin matrices}

\vskip 3mm 

We will borrow results from \cite{CV1} on the connection between Suslin matrices and Clifford algebras. Specifically we need the well-known exceptional isomorphisms $\Spin_6(R) \cong SL_4(R)$ and $\Epin_6(R) \cong E_4(R).$ (For a proof using Suslin matrices, see \cite[Theorems 7.1, 8.4]{CV1}. For another proof, see \cite[Ch. 5, \S 5.6]{Kn}).

Define $*$ to be the involution on $M_4(R)$ given by $ M^* = JM^{\intercal }J^{\intercal }$ where 
 $ J = \begin{psmallmatrix}
0 & 1 &  0 & 0\\
-1  & 0 &  0& 0  \\
0 & 0 & 0 & -1 \\
0 & 0 & 1  & 0 \end{psmallmatrix}.$
Note that $ *$ is an involution because $J^{\intercal } = -J =  J^{-1}.$  
\vskip 2mm
Let's identify the Suslin matrix $\mathcal{S}(v,w)$ with the element $(v,w)$ in the quadratic space $H(R^3).$ 
Under the isomorphism $\psi :\Spin_6(R) \cong SL_4(R)$, the Spin group behaves as  follows :  for $g \in SL_4(R)$,  the action is given by $g \bullet \mathcal{S}(v,w) = g\mathcal{S}(v,w)g^*.$ Simplifying the notation, we will sometimes write $\mathcal{S}, V$ instead of  $\mathcal{S} (v,w),  V(v,w).$

%%% %%%%
 \vskip 2mm 
 
Any $4 \times 4$ alternating matrix is of the form $V(v,w)$, corresponding to the element $(v,w ) \in H(R^3).$ Let $A_4(R)$ denote the set of all such matrices with $v\cdot w^{\intercal} =1$ (the unit sphere in $H(R^3)$). 
The group $SL_4(R)$ acts on the matrices $V(v,w)$ as $(g, V) \rightarrow gVg^T.$ Recall that the unit sphere in $H(R^3)$ is denoted by $U_5(R).$

\begin{thm}\label{vaserstein}
We have the bijection  
$ \frac{U_5(R)}{\Spin_6(R)} \leftrightarrow \frac{A_4(R)}{SL_4(R)}.$
Therefore, 
\[\frac{Um_3(R)}{E_3(R)} \leftrightarrow  \frac{U_5(R)}{EO_6(R)} = \frac{U_5(R)}{\Epin_6(R)} \leftrightarrow \frac{A_4(R)}{E_4(R)}\] 
where
$ v \rightarrow \mathcal{S}(v,w) \rightarrow V(v,w)$ for any element $(v,w) \in U_5(R).$ \end{thm}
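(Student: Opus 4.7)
The plan is to prove the first bijection $\frac{U_5(R)}{\Spin_6(R)} \leftrightarrow \frac{A_4(R)}{SL_4(R)}$ directly by checking the set-level bijection and the equivariance, then restrict to the elementary subgroups using $\Epin_6(R) \cong E_4(R)$, and finally concatenate with the $n=3$ case of Theorem \ref{main} to obtain the full chain.

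First, I would verify that the map $\Phi : U_5(R) \to A_4(R)$ defined by $\Phi(v,w) = V(v,w) = \beta \mathcal{S}_2(v,w) J \beta^{\intercal}$ is a bijection. The identity $(\mathcal{S} J)^{\intercal} = -\mathcal{S} J$ (a reformulation of $J \mathcal{S}^{\intercal} J^{\intercal} = \mathcal{S}$ from Equation (\ref{eq1})) makes $V$ alternating; the explicit matrix form in Section 5 shows its Pfaffian equals $v \cdot w^{\intercal} = 1$, so the image lies in $A_4(R)$; and the explicit entries of $V(v,w)$ directly encode $v$ and $w$, supplying the inverse.

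Next, I would check equivariance. Under $\psi : \Spin_6(R) \cong SL_4(R)$ (recalled from \cite{CV1}), an element $g$ acts on Suslin matrices by $\mathcal{S} \mapsto g \mathcal{S} g^*$, where $g^* = J g^{\intercal} J^{\intercal}$. Using $J^{\intercal} J = I$ (a consequence of $JJ^{\intercal} = I$) and $\beta^{\intercal} = \beta^{-1}$, a short computation gives
\[ V(g \bullet (v,w)) \;=\; (\beta g \beta^{-1}) \, V(v,w) \, (\beta g \beta^{-1})^{\intercal}. \]
Since $g \mapsto \beta g \beta^{-1}$ is an inner automorphism of $GL_4(R)$ that preserves $SL_4(R)$, the orbit spaces under the two actions coincide, and $\frac{U_5(R)}{\Spin_6(R)} \leftrightarrow \frac{A_4(R)}{SL_4(R)}$ follows.

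Finally, to pass to elementary subgroups I would observe that $\beta$ is a signed permutation matrix, so conjugation by $\beta$ sends each elementary generator $E_{ij}(\lambda)$ to some $E_{i'j'}(\pm\lambda)$ and therefore restricts to an automorphism of $E_4(R)$. Combined with $\Epin_6(R) \cong E_4(R)$, the equivariance from the previous paragraph restricts to $\frac{U_5(R)}{\Epin_6(R)} \leftrightarrow \frac{A_4(R)}{E_4(R)}$. Chaining this with the $n=3$ case of Theorem \ref{main}, which supplies $\frac{Um_3(R)}{E_3(R)} \leftrightarrow \frac{U_5(R)}{\Epin_6(R)} = \frac{U_5(R)}{EO_6(R)}$, produces the composite bijection sending $v \mapsto \mathcal{S}(v,w) \mapsto V(v,w)$. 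The main technical obstacle is the equivariance computation—juggling the Clifford-algebra involution $g^* = Jg^{\intercal}J^{\intercal}$ alongside the auxiliary conjugation by $\beta$—while the remaining steps reduce to cleanly applying the results already established in Part A and cited from \cite{CV1}.
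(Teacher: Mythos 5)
Your proposal is correct and follows essentially the same route as the paper: identify $A_4(R)$ with $U_5(R)$ via $V=\beta\mathcal{S}_2J\beta^{\intercal}$, carry out the equivariance computation $V(g\bullet(v,w)) = (\beta g\beta^{-1})V(v,w)(\beta g\beta^{-1})^{\intercal}$ using $J^{\intercal}J=1$ and $\beta^{\intercal}\beta=1$, and then chain with Theorem \ref{main}. The one place where you genuinely diverge is the justification that conjugation by $\beta$ preserves $E_4(R)$: the paper invokes Suslin's theorem that $E_n(R)$ is normal in $GL_n(R)$ for $n\geq 3$, whereas you observe that $\beta$ is a signed permutation matrix, so $\beta E_{ij}(\lambda)\beta^{-1}=E_{i'j'}(\pm\lambda)$ generator by generator. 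Your argument is more elementary and self-contained (it avoids a nontrivial citation for a step that only needs conjugation by one explicit matrix), while the paper's is shorter to state; both are valid. You also take the extra care of verifying that $(v,w)\mapsto V(v,w)$ is a set-level bijection onto $A_4(R)$, which the paper asserts without comment.
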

\begin{proof}
The bijection between the $E_3(R)$-orbits of unimodular rows and $\Epin_6(R)$-orbits on the unit sphere follows from Theorem $\ref{main}$ in Part A. For the second part, note that $A_4(R)$ corresponds to the unit sphere in $H(R^3).$ We will now show that the group actions on $H(R^3)$ are the same.
Remember that  $V = \beta\mathcal{S}J\beta^{\intercal}.$  Let $ \mathcal{S}(v',w') = g \bullet \mathcal{S}(v,w).$ Writing $\mathcal{S} = \mathcal{S}(v,w)$, we have 
\begin{equation*}
\begin{split}
V(v', w') &= \beta(g \bullet \mathcal{S})J\beta^{\intercal} \\
&= \beta(g\mathcal{S}g^*)J\beta^{\intercal}\\
&=  \beta(g\mathcal{S}Jg^{\intercal }J^{\intercal }) J \beta^{\intercal} \\
&=  \beta g\mathcal{S}Jg^{\intercal } \beta^{\intercal}     \hskip 32mm  (\text{as } J^{\intercal} J = 1)\\
&=   (\beta g\beta^{\intercal}) (\beta \mathcal{S}J\beta^{\intercal })(\beta g^{\intercal} \beta^{\intercal})  \hskip 10mm  (\text{as }\beta^{\intercal} \beta = 1)  \\
&=  g'(\beta \mathcal{S}J\beta^{\intercal } ) g'^{\intercal}    \hskip 27mm     (g' = \beta g \beta{^\intercal})\\
&= g'V(v,w)g'^{\intercal}
\end{split}
\end{equation*}
where $ g' = \beta g \beta{^\intercal} = \beta g \beta^{-1}.$  
Since $E_n(R)$ is a normal subgroup of $GL_n(R)$ for $n\geq3$ (see \cite[Corollary 1.4]{S2}), it follows that $g' \in E_4(R)$ whenever $g \in E_4(R).$ Therefore we have the bijection between the respective (elementary) orbit spaces 
\[\frac{U_5(R)}{\Epin_6(R)} \leftrightarrow \frac{A_4(R)}{E_4(R)}.  \qedhere \]
\end{proof}

The above correspondence gives another proof of the following well-known exceptional isomorphism.

\begin{thm}
\[\Spin_5(R) \cong Sp_4(R). \]
\end{thm}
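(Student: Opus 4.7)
The plan is to realize $\Spin_5(R)$ as the stabilizer in $\Spin_6(R)$ of a well-chosen unit vector on the sphere $U_5(R) \subset H(R^3)$, and then transport this stabilizer across the isomorphism $\Spin_6(R) \cong SL_4(R)$ of Theorem~\ref{vaserstein} to recognize it as a symplectic group. Taking $\Spin_5(R)$ to be the subgroup of $\Spin_6(R)$ that fixes a chosen non-degenerate unit vector is the standard construction: the stabilizer acts faithfully on the $5$-dimensional orthogonal complement of that vector inside $H(R^3)$.

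First I would fix the convenient base point $(v_0, w_0) = ((1,0,0), (1,0,0)) \in U_5(R)$, since $v_0 \cdot w_0^{\intercal} = 1$. Direct substitution into the formula for $V(v,w)$ from the previous section yields
\[V_0 := V(v_0, w_0) = \begin{psmallmatrix} 0 & -1 & 0 & 0 \\ 1 & 0 & 0 & 0 \\ 0 & 0 & 0 & -1 \\ 0 & 0 & 1 & 0 \end{psmallmatrix},\]
a block-diagonal alternating matrix with Pfaffian $1$, i.e.\ a non-degenerate symplectic form on $R^4$.

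By Theorem~\ref{vaserstein}, the conjugation action of $\Spin_6(R) \cong SL_4(R)$ on $H(R^3)$ corresponds to the action $(g, V) \mapsto g' V (g')^{\intercal}$ on $A_4(R)$, where $g' = \beta g \beta^{-1}$. Since $g \mapsto g'$ is the inner automorphism of $SL_4(R)$ given by conjugation by $\beta$, the stabilizer of $(v_0, w_0)$ in $\Spin_6(R)$ is carried to a subgroup of $SL_4(R)$ conjugate, hence isomorphic, to
\[\{g \in SL_4(R) : g V_0 g^{\intercal} = V_0\} = Sp_4(R),\]
the symplectic group of the form $V_0$. Combined with the identification from the first paragraph, this produces the required isomorphism $\Spin_5(R) \cong Sp_4(R)$.

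The main obstacle is the first step: making the identification of $\Spin_5(R)$ with the stabilizer of a unit vector precise over an arbitrary commutative ring, as opposed to over a field where orbit transitivity is automatic. One approach is simply to take the stabilizer of $(v_0,w_0)$ as the working definition of $\Spin_5(R)$. A more canonical route is to observe that everything in sight, the $\Spin_6 \cong SL_4$ identification, the condition $gV_0 g^{\intercal} = V_0$, and the fixing of $(v_0,w_0)$, is cut out by polynomial equations with integer coefficients, so that the argument yields an isomorphism of affine group schemes over $\mathbb{Z}$ which then specializes to any $R$. After this setup, the rest is an immediate consequence of Theorem~\ref{vaserstein} together with the very definition of $Sp_4$ as the stabilizer of a non-degenerate alternating form.
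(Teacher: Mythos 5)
Your argument is correct and is essentially the paper's own proof: both identify $\Spin_5(R)$ as the stabilizer in $\Spin_6(R)$ of the unit vector $(v_0,w_0)=((1,0,0),(1,0,0))$ and then use the isomorphism $\Spin_6(R)\cong SL_4(R)$ to recognize that stabilizer as the isometry group of a nondegenerate alternating form of Pfaffian $1$. The only cosmetic difference is that the paper works directly with $\mathcal{S}(v_0,w_0)=I_4$ and the condition $gg^*=1$, i.e.\ $gJg^{\intercal}=J$ with $J$ isometric to $\psi_2$, whereas you conjugate by $\beta$ into the Vaserstein picture and stabilize $V_0=\beta J\beta^{\intercal}$; these are the same subgroup up to an inner automorphism of $SL_4(R)$.
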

\begin{proof}
The proof follows by identifying $\Spin_5(R)$ as a subgroup of $\Spin_6(R)$ which fixes $(v,w) = (1,0, 0, 1, 0, 0).$ 
The elements of $\Spin_5(R)$ then correspond to matrices $g \in SL_4(R)$ such that $gg^* = 1.$ In other words,
$ g J g^{\intercal} = J.$ Since $J$ is (canonically) isometric to $\psi_2$, it follows that $Spin_5(R) \cong Sp_4(R).$
\end{proof}

In light of Theorem \ref{vaserstein}, the Vaserstein symbol $V : \frac{Um_3(R)}{E_3(R)} \rightarrow W_E(R)$ can now be decomposed as 
\[V : \frac{Um_3(R)}{E_3(R)} \cong  \frac{A_4(R)}{E_4(R)} \rightarrow W_E(R).\]

\noindent The injectivity (surjectivity) of the Vaserstein symbol boils down to the injectivity (surjectivity) of the map $\frac{A_4(R)}{E_4(R)} \rightarrow W_E(R)$, which  is defined naturally via the inclusion map. The interpretation in terms of Spin groups is summarized in the table below :
\vskip 5mm
\begin{tabular}{ c|c } 
 \hline
 Vaserstein symbol & The Spin group interpretation\\
 \hline
 \\
 $4 \times 4$ Vaserstein matrix $V(v,w)$ & Suslin matrix $\mathcal{S}_2(v,w)$, ($V =\beta\mathcal{S}_2J_2\beta^{\intercal}$) \\\\
 Action of $SL_4(R), E_4(R)$:  & Action of $\Spin_6(R), \Epin_6(R)$:   \\
 $(g ,V) \rightarrow gV g^{\intercal}$ &  $g \bullet \mathcal{S} = g\mathcal{S} g^*$ \\\\
 $A_4(R)$ & $U_5(R)$\\\\

 Orbits of Unimodular rows & Orbits on the sphere  ($v\cdot w^\intercal =1$) \\
 $ \frac{Um_3(R)}{E_3(R)} \cong \frac{A_4(R)}{E_4(R)}$ &  $\frac{U_5(R)}{\Epin_6(R)}$
 \\\\
 $SL_4(R), E_4(R)$ & $\Spin_6(R), \Epin_6(R)$\\\\
 $Sp_4(R)$ & $\Spin_5(R)$\\
 \hline
\end{tabular}
\vskip 5mm

%%%%
% A QUESTION ABOUT KSpin
%%%%%

\subsection{A question about $K\Spin_1(R)$} 
\hfill \vskip 1mm

It turns out that if $\mathcal{X,Y} \in M_{2^k}(R)$ are two Suslin matrices, then $ \mathcal{XYX}$ is also a Suslin matrix and $\mathcal{\overline{XYX} = \bar{X} \bar{Y} \bar{X}}$ (see \cite[Theorem 3.4]{CV1}). 
Now suppose $\mathcal{X \overline{X}} = 1$, and take $g = \bigl(\begin{smallmatrix}
\mathcal{X}                &  0 \\
0& \mathcal{\overline{X}}         \end{smallmatrix}\bigr).$ Then $g^{-1} = \bigl(\begin{smallmatrix}
\mathcal{\overline{X}}                         &  0 \\
0& \mathcal{X} \end{smallmatrix}\bigr)$ and
\begin{equation}\label{eq4}
g\bigl(\begin{smallmatrix}
0               & \mathcal{ Y} \\
\mathcal{\overline{Y}} &0         \end{smallmatrix}\bigr)
g^{-1}= \bigl(\begin{smallmatrix}
0               &  \mathcal{XYX} \\
\mathcal{\overline{XYX}} &0         \end{smallmatrix}\bigr).
\end{equation}

Moreover, as $\mathcal{X\overline{X}} =1$, we have $\mathcal{XYX \cdot \overline{XYX} = Y\overline Y}.$ Recall that an element $(v',w') \in H(R^n)$ corresponds to the matrix 
$\phi(v',w') =  \bigl(\begin{smallmatrix}
0                  & \mathcal{Y'} \\
\overline{\mathcal{Y'}}         & 0 \end{smallmatrix}\bigr)$, where $\mathcal{Y'} = \mathcal{S}_{n-1}(v',w').$
Therefore it follows from Equation (\ref{eq4}) that there is a map $U_{2n-1}(R) \rightarrow \Spin_{2n}(R)$ given by $(v,w) \rightarrow \bigl(\begin{smallmatrix}
\mathcal{S}_{n-1}(v,w)                &  0 \\
0& \overline{\mathcal{S}_{n-1}(v,w)}         \end{smallmatrix}\bigr).$ 

This induces a map $ \frac{U_{2n-1}(R)}{\Epin_{2n}(R)} \rightarrow \frac{\Spin_{2n}{R}}{\Epin_{2n}(R)} \rightarrow K\Spin_1(R).$ What is the relation between $W_E(R)$ and the abelian group $K\Spin_1(R)$?

\subsection{Vaserstein composition}

The paper \cite{SV} also introduced a composition law on unimodular rows (\cite[Theorem 5.2]{SV}). 
 The composition law was later generalized to $Um_n(R)$ by W. van der Kallen using what are called weak Mennicke symbols as follows (see \cite[Lemma 3.4]{vdk2}) : 

  Let $v_1 = (a_1, a_2, a_3, \cdots, a_n)$ and $v_2 = (c_1, c_2, a_3, \cdots, a_n)$ be two unimodular rows and choose $d_1, d_2$ such that the determinant of 
$\beta = \begin{psmallmatrix}
                 c_1 & c_2\\
                - d_2 & d_1\\
                \end{psmallmatrix}$  has image $1$ in 
                $R/\langle a_3, \cdots, a_n \rangle.$ Then
    \[ wms(v_1)wms(v_2) = wms (p, q, a_3, \cdots a_n)\]
  where  $(p,q) = (a_1, a_2) \beta .$
  
In Part C, we will introduce a new composition law on certain subspaces of $H(R^n)$ satisfying the same properties. (See Remark \ref{rmk1}). Moreover this law has the nice feature that it is expressed recursively using matrices. It turns out that this composition of unimodular rows is a special case of a more general law, which acts on certain subspaces of 
$A \oplus H(R^n)$ where $A$ is a composition algebra. The Vaserstein composition corresponds to the case where the composition algebra is the algebra of split quaternions.

As an illustration of the results in Part C, we will now interpret Vaserstein's composition rule using Suslin matrices for the case $n=3.$
\vskip 1mm
Fix $a_1 \in R$. Let $v = (a_1, a_2,a_3)$ and $v'= (a_1, a_2', a_3')$ be two unimodular rows (with $a_1 = a_1'$). Suppose $v \cdot w^{\intercal} = v' \cdot w'^{\intercal} = 1$ for some 
$w = (b_1, b_2,b_3)$ and $w' = (b_1', b_2', b_3').$ The Suslin matrices corresponding to $(v,w)$ and $(v',w')$ are 
 \[\mathcal{X} = \mathcal{S}(v,w) = \begin{psmallmatrix}
a_1     &  \alpha \\  
-\overline{\alpha}       &  b_1\end{psmallmatrix} \hskip 3mm \text{ and } \hskip 3mm 
\mathcal{Y} =\mathcal{S}(v',w')  = \begin{psmallmatrix}
a_1   &  \beta \\   
-\overline{\beta}       &  b_1'\end{psmallmatrix},\] where 
$\alpha = \begin{psmallmatrix}
                 a_2 & a_3\\
                - b_3 & b_2\\
                \end{psmallmatrix}$ and 
   $\beta = \begin{psmallmatrix}
                 a_2' & a_3'\\
                - b_3' & b_2'\\
                \end{psmallmatrix}.$
                
\vskip 1mm                
Finally define the composition
   \[\mathcal{X} \odot \mathcal{Y}
:=
\begin{pmatrix}
a_1     &  \alpha \beta \\\\    
-\overline{\alpha \beta}       &    \hskip 5mm b_1 + b_1' - a_1b_1 b_1'\end{pmatrix}\]
The element $\mathcal{X} \odot \mathcal{Y}$ is also a Suslin matrix and $q(\mathcal{X} \odot \mathcal{Y}) = q(\mathcal{X})q(\mathcal{Y}).$ Notice that the product
$\mathcal{X}\odot  \mathcal{Y} = \mathcal{S}(v'', w'')$ is similar to the Vaserstein composition, as we get $v'' = (a_1, (a_2, a_3)\beta).$

%
%%
%%% Part 3!!!!
 %%%%
%%%%%% Composition laws

\vskip 5mm

{\centering\Large \part{A general Composition law}}
\vskip 5mm

\section{Preliminaries on Composition algebras}

\subsection{Notation} 
Let $R$ be a commutative ring and $V$ be a free $R$-module equipped with a quadratic form $q$ and basis $\{v_1,\cdots v_n\}.$ Let $B = \bigl(\langle v_i,v_j\rangle \bigr)$ be the matrix corresponding to the bilinear form $\langle v,w\rangle = q(v+w) - q(v) - q(w)$, for $v,w \in V.$

\vskip 1mm

 A quadratic form $q$ is said to be \emph{non-degenerate} if $\langle x,v\rangle =0 $ for all $v \in V$
implies that $x=0.$
Writing $x=(x_1,\cdots,x_n),$ this means that the equation $xB =0$ has only the trivial solution, or equivalently, $\det(B)$ is a non-zero divisor
\cite[Corollary 1.30]{MD}. 
We say that $q$ is \emph{non-singular} 
if $B$ is invertible.

One advantage with non-singular quadratic spaces $(V,q)$ is that the Clifford algebra $Cl(V,q)$ will have the structure of a graded Azumaya algebra \cite[Corollary 3.7.5]{HM} and this can be a useful tool in proving results. In this paper, we are only assuming that the quadratic form is non-degenerate. 
\begin{rmk}
For general theory of quadratic forms and Clifford algebras over commutative rings, see \cite{B2, HM, Kn}. The book \cite{HM} uses a slightly different terminology by referring to $q$ as weakly non-degenerate if $\det(B)$ is a non-zero divisor, and non-degenerate if $B$ is invertible.
\end{rmk}

\vskip 1mm
\subsection{Composition algebras}
A composition algebra $(A,q)$ over $R$ is a (not necessarily associative) $R$-algebra such that ($A$ is a free $R$-module of finite rank and)
\begin{itemize}
\item $A$ has an identity element and contains a copy of $R$, i. e., $R \hookrightarrow A$ (mapping $1\in R$ to $1 \in A$).
\item $A$ is equipped with a non-degenerate quadratic form $q$ satisfying 
$q(xy) = q(x)q(y)$
for all $x, y \in A.$
\end{itemize}
 For any composition algebra 
$(A,q)$, there is an involution $\alpha \rightarrow \overline{\alpha}$ such that 
$q(\alpha) = \alpha\bar{\alpha} = \bar{\alpha}\alpha = q(\overline{\alpha})$, for all $\alpha \in A.$  
The quadratic form on $A$ is sometimes referred to as the norm of $A$. It is known that $\rank(A)$ has to be $1, 2, 4,$ or $8$ (see \cite[V. 7.1.6]{Kn}).

Composition algebras of rank $4$ and $8$ are called \emph{quaternions} and \emph{octonions} respectively. Most of their applications arise when the base ring is a field. The new book \cite{Vo} captures some of the wide-ranging connections of quaternions to different branches of Mathematics. Some applications of octonions can be found in \cite{Ba,CS}. 

When the base ring is a field,  a scalar multiple of the norm of the composition algebra $A$ determines $A$ upto isomorphism \cite[\S 1.7]{SVe}. For non-singular quadratic forms, this result has been extended to quaternion algebras over commutative rings by Knus-Ojanguren-Sridharan \cite[Prop 4.4]{KOS}.  On the other hand, octonion algebras (over commutative rings) are not determined by their norm, and a counterexample has been provided by P. Gille \cite{G}.

\vskip 3mm

\section{Composition law}
\vskip 2mm

The following construction is inspired by the construction of Suslin matrices. 
Let $(A,q)$ be any composition algebra. Consider the quadratic space $A \oplus H(R)$, where $H(R)$ is a hyperbolic plane. 
For each element  $ (\alpha,a,b ) \in A \oplus H(R)$, the quadratic form is given by $$q(\alpha,a,b) = \alpha\bar{\alpha} +ab.$$

One can represent $(\alpha,a,b)$ as a matrix  $Z = \left(\begin{smallmatrix}
a     &  \alpha \\  
-\overline{\alpha}       &  b\end{smallmatrix}\right).$ Define $\overline{Z} =  \left(\begin{smallmatrix}
b     &  -\alpha \\  
\overline{\alpha}       &  a\end{smallmatrix}\right).$ Then \[q(Z) = Z\overline{Z} = \overline{Z}Z = \alpha\bar{\alpha} +ab.\]
\vskip 2mm 

For any such matrix, we will sometimes write $q_Z$ instead of $q(Z).$ One of the reasons we rewrite the elements as $2 \times 2$ matrices is that it is easier to express the composition law and generalize the analysis to $A \oplus H(R^n).$ In addition, as we shall see later, this matrix representation gives a simple description of the corresponding Clifford algebra.

\vskip 2mm
\subsection{Composition law for hyperplanes of $A \oplus H(R)$}
\hfill
\vskip 1mm

Let $X = \begin{pmatrix}
a     &  \alpha \\  
-\overline{\alpha}       &  b\end{pmatrix}$ and 
$Y = \begin{pmatrix}
a    &  \beta \\   
-\overline{\beta}       &  b'\end{pmatrix}.$
When $q_X = q_Y =1$, define 
\[X \odot Y
:=
\begin{pmatrix}
a   &  \alpha \beta \\\\    
-\overline{\alpha \beta}        &    \hskip 5mm  b +b' -abb' \end{pmatrix}\]
Then $q(X \odot Y) = 1.$
\vskip 2mm 

\noindent  For general $X, Y$ define
\[ X \odot Y
:=
\begin{pmatrix}
a      &  \alpha \beta \\\\    
-\overline{\alpha \beta}        &    \hskip 5mm bq_Y + b'q_X - abb'\end{pmatrix}.\]

\noindent From the equations
\[ \alpha \bar{\alpha} = q_X -ab, \hskip 5mm \beta \bar{\beta} = q_Y - ab',\]
it follows that \[ q(X \odot Y) = q(X) q(Y). \]
\noindent When the underlying composition algebra is associative, the operation $\odot$ is  also associative with the identity element $\left(\begin{smallmatrix}
a    &  1 \\\\    
-1       &  0\end{smallmatrix}\right).$ 
\vskip 2mm

If we take $a= b= b'=0$, then $X \odot Y
=
\left(\begin{smallmatrix}
0    &  \alpha \beta \\\\    
-\overline{\alpha \beta}       &  0\end{smallmatrix}\right)$ corresponds to the multiplication in the composition algebra.
When $A \cong M_2(R)$ is the algebra of split quaternions, then the above composition law gives us the Vaserstein composition on unimodular rows  stated in Part B (see Remark \ref{rmk1}).

\vskip 2mm

\subsection{The quadratic space $A \oplus H(R^n)$}
\hfill 
\vskip 1mm

Consider next the quadratic space $A \oplus H(R^n)$, where $H(R^n) = R^n \oplus {R^n}^*.$ For each element  $ (\alpha,v,w ) \in A \oplus H(R^n)$, the quadratic form is given by $$q(\alpha,v,w) = \alpha \bar{\alpha} +v\cdot w^\intercal .$$
Here $\alpha$ is an element of the composition algebra $A$ and $v,w \in R^n.$
\vskip 1mm
\noindent By fixing a basis of $R^n$, let us write $v =(a_1, \cdots, a_n)$ and $w =(b_1, \cdots, b_n).$
Let  $Z_1(\alpha,v,w) = \left(\begin{smallmatrix}
a_1     &  \alpha \\  
-\overline{\alpha}       &  b_1\end{smallmatrix}\right)$ and $\overline{Z_1(\alpha,v,w)} =  \left(\begin{smallmatrix}
b_1    &  -\alpha \\  
\overline{\alpha}       &  a_1\end{smallmatrix}\right).$ 
\vskip 1mm

\noindent For $i>1$, define recursively the matrices $Z_i (\alpha,v,w) := \left(\begin{smallmatrix}
a_i     &  Z_ {i-1}\\  
-\overline{Z_{i-1}}       &  b_i\end{smallmatrix}\right)$ and $\overline{Z_i(\alpha,v,w)} :=  \left(\begin{smallmatrix}
b_i    &  -Z_{i-1} \\  
\overline{Z_{i-1}}       &  a_i\end{smallmatrix}\right).$

\vskip 1mm

\noindent Then $Z_i$ is a $2^i \times 2^i$ matrix and \[ q(Z_i) = Z_i\bar{Z_i} = \bar{Z_i}Z_i = \alpha \bar{\alpha} +a_1b_1 +\cdots+ a_ib_i. \]

\vskip 3mm 
\subsection{Composition law for certain subspaces of $A \oplus H(R^n)$}
\hfill 
\vskip 1mm

Fix $v =(a_1,\cdots, a_n) \in R^n.$
\vskip 1mm
\noindent Let $\alpha,\beta \in A, w = (b_1,\cdots, b_n)$ and $w' =(b'_1,\cdots ,b'_n).$
 \vskip 2mm
 \noindent Write $X_i = Z_i(\alpha,v,w)$ and $Y_i = Z_i(\beta,v,w').$
By definition, we have 
\[q_{X_i} = a_ib_i +q_{X_{i-1}} \hskip 3mm \text{and} \hskip 3mm q_{Y_i} = a_ib'_i +q_{Y_{i-1}}.\]

\vskip 2mm
\noindent Define the composition $X_i \odot Y_i$ recursively as
\[X_i \odot Y_i
:=
\begin{pmatrix}
  a_i  &  X_{i-1}\odot Y_{i-1} \\\\    
-\overline{  X_{i-1}\odot Y_{i-1} }         &   \hskip 5mm  b_i q_{Y_i} +b'_iq_{X_i} -a_ib_ib'_i\end{pmatrix}.\]
\noindent By induction, it follows that 
\[q_{X_n \odot Y_n}  =q_{X_n}q_{Y_n}.\]

\begin{rmk}\label{rmk1}
As promised in Section 6.4, we will now interpret (van der Kallen's) composition of unimodular rows (see \cite[Lemma 3.4]{vdk2}) using $\odot.$

When $A \cong M_2(R) $ is the algebra of split quaternions, the matrices $Z(\alpha, v,w)$ are Suslin matrices. 
Let $v_1 = (a_1, a_2, a_3, \cdots, a_n)$ and $v_2 = (c_1, c_2, a_3, \cdots, a_n)$ be two unimodular rows  such that $v_i \cdot w_i^{\intercal} =1.$  
Suppose $\mathcal{S}(v_1, w_1) \odot \mathcal{S}(v_2, w_2)  = \mathcal{S}(v_3, w_3).$ Then \[v_3 = (p, q, a_3, \cdots , a_n)\] where $(p,q) = (a_1, a_2) \beta.$ Here 
 $\beta = \begin{psmallmatrix}
                 c_1 & c_2\\
                - d_2 & d_1\\
                \end{psmallmatrix}$
where $w_2 = (d_1, d_2, \cdots, d_n).$
Clearly the determinant of $\beta$  has image $1$ in 
                $R/\langle a_3, \cdots, a_n \rangle$ as $v_2 \cdot w_2^{\intercal} =1.$ 
\end{rmk}

\begin{rmk}\label{rmk2}
Suppose $R$ is a commutative Noetherian ring of stable dimension $d$ with $d \leq 2n-3.$ Let $v_1, v_2 \in Um_n(R)$ such that $v_i \cdot w_i ^\intercal=1.$ Then the Mennicke-Newman lemma says that one can find $ \varepsilon_i \in E_n(R)$ such that $v _i\varepsilon_i = (x_i, a_2, \cdots, a_n)$ (for some $x_i, a_k \in R$. See \cite[Lemma 3.2]{vdk3}).

Together with Theorem \ref{main}, this means that $(v_1,w_1), (v_2, w_2)$ are $\Epin_{2n}(R)$-equivalent to the points $(v_i \varepsilon_i ,w_i \varepsilon_i^{\intercal^{-1}})$ which lie on the same $(n+1)$-dimensional subspace (determined by the  elements $a_k$) where the composition $\odot$ can operate.
\end{rmk}

%%%
%%  Clifford Algebras
%
%%%%%%%%
\vskip 5mm

\section{The Clifford algebra of $A \oplus H(R^n)$ : the quaternion case }

Here we will consider the case when $A$ is a quaternion algebra over $R.$ 
Let $V = A \oplus H(R^n)$ and we will continue representing its elements $(\alpha,v,w)$ as a matrix $Z_n(\alpha,v,w).$ Notice that 
$Z_n(\alpha,v,w) \in M_{2^n}(A).$
\vskip 2mm
Consider the map $\phi : V  \rightarrow  M_{2^{n+1}}(A)$ given by 
\[
(\alpha,v,w) \rightarrow
\begin{bmatrix}
0 & Z_n(\alpha,v,w)\\
\overline{Z_n(\alpha,v,w) }& 0\\
\end{bmatrix}
\]
Since $\phi(\alpha,v,w)^2 = q(\alpha,v,w)$, by the universal property of Clifford algebras the map lifts to an $R$-algebra homomorphism 
\[\phi: Cl(V) \rightarrow M_{2^{n+1}}(A). \]
The map $\phi$ is in fact a $\mathbb{Z}_2$-graded homomorphism, where the even and odd elements of $M_{2^{n+1}}(A)$ correspond to matrices of the form
$(\begin{smallmatrix}
*    &  0  \\
0       &*  \end{smallmatrix})$ and $(\begin{smallmatrix}
 0   &  *  \\
*        & 0 \end{smallmatrix}).$

\begin{thm}\label{iso}
The map $\phi: Cl(V) \rightarrow M_{2^{n+1}}(A)$ is injective. Moreover, $\rank[Cl(V)] = \rank[M_{2^{n+1}}(A)].$
\end{thm}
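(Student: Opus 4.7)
First I would verify the rank equality, which is a direct computation. We have $\rank_R(V) = \rank_R(A) + \rank_R H(R^n) = 4 + 2n$. By the Poincar\'e--Birkhoff--Witt theorem (Theorem 2.1), $\rank_R Cl(V) = 2^{2n+4}$. On the other hand, $\rank_R M_{2^{n+1}}(A) = (2^{n+1})^2 \cdot 4 = 2^{2n+4}$, so the two ranks agree.

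\textbf{Strategy for injectivity.} The plan is to reduce to the case where $A = M_2(R)$ is split, and then invoke the Suslin-matrix Clifford algebra isomorphism from Section 2. When $A = M_2(R)$, the norm form on $A$ is the determinant, a hyperbolic form of rank $4$; hence $A \oplus H(R^n) \cong H(R^{n+2})$ as quadratic spaces. Unwinding the recursion defining $Z_n(\alpha, v, w)$ with $\alpha = \begin{psmallmatrix} p & q \\ r & s \end{psmallmatrix} \in M_2(R)$ written entrywise, one should identify $Z_n(\alpha, v, w)$, up to a fixed change of basis that intertwines $M_{2^{n+1}}(M_2(R))$ with $M_{2^{n+2}}(R)$, with the Suslin matrix $\mathcal{S}_{n+1}(v', w')$ for suitable $v', w' \in R^{n+2}$ built from $(v,w)$ and the entries of $\alpha$. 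Under this identification, $\phi$ becomes the Clifford algebra isomorphism $Cl(H(R^{n+2})) \cong M_{2^{n+2}}(R)$ of Section 2.3, and is in particular bijective in the split case.

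\textbf{Descent to general $A$.} A quaternion algebra $A$ over $R$ is \'etale-locally split, so there is a faithfully flat extension $R \to R'$ with $A \otimes_R R' \cong M_2(R')$. The formation of $Cl(V)$ and of matrix algebras over $A$ both commute with flat base change, and $\phi$ is functorial in $R$, so $\phi \otimes_R R'$ is precisely the map $\phi$ in the split case over $R'$, hence an isomorphism. Faithful flatness of $R \to R'$ then forces $\ker \phi = 0$, giving injectivity for arbitrary $A$.

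\textbf{Main obstacle.} The substantive work is the split-case identification $Z_n(\alpha, v, w) \leftrightarrow \mathcal{S}_{n+1}(v', w')$. One has to check by induction on $n$ that the block recursion for $Z_n$ (which packages a $2 \times 2$ matrix $\alpha$ into each recursive step) matches the scalar Suslin recursion applied to an extended length-$(n+2)$ vector, with the correct signs, transpositions, and involution $M \mapsto M^*$ along the way. This is essentially combinatorial bookkeeping rather than a conceptual difficulty, but it does require care; once it is in place, the rank computation and the descent step are both formal. An alternative, avoiding descent entirely, would be to choose a PBW basis of $Cl(V)$ coming from a basis $\{u_1,u_2,u_3,u_4\}$ of $A$ together with the hyperbolic basis of $H(R^n)$, and to show directly, by tracking supports of the matrices $\phi(u_i)$, $\phi(e_j)$, $\phi(f_k)$ under iterated multiplication, that the $2^{2n+4}$ ordered products remain $R$-linearly independent in $M_{2^{n+1}}(A)$; this mirrors the Suslin matrix argument of Section 2 but with entries in $A$ rather than $R$.
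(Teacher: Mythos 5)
Your rank computation is exactly the paper's, and your descent strategy is attractive, but it has a genuine gap in the generality in which the theorem is stated. The paper deliberately assumes only that the norm form of $A$ is \emph{non-degenerate} ($\det B$ a non-zero-divisor), not \emph{non-singular} ($B$ invertible) — see the Notation in Part C and the Remark immediately following the theorem, which treats the non-singular case separately. A rank-$4$ composition algebra with merely non-degenerate norm need not be Azumaya and need not become isomorphic to $M_2$ after \emph{any} faithfully flat base change. For instance the Lipschitz quaternions $\mathbb{Z}\oplus\mathbb{Z}i\oplus\mathbb{Z}j\oplus\mathbb{Z}ij$ over $\mathbb{Z}$, with norm $x^2+y^2+z^2+w^2$ and Gram determinant $16$, are a composition algebra in the paper's sense; their reduction mod $2$ is commutative, whereas $M_2(R'/2R')$ is not (and $R'/2R'\neq 0$ by faithful flatness), so no faithfully flat $\mathbb{Z}\to R'$ can split them. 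Your descent step therefore only establishes the theorem when $q_A$ is non-singular, i.e.\ when $A$ is Azumaya.

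The paper's own proof sidesteps the structure of $A$ entirely: $\phi$ is injective on $V$, $\ker\phi\cap R=\{0\}$, and the general criterion of \cite[Theorem 2.7]{CV2} (valid for any non-degenerate quadratic space) then forces $\phi$ to be injective. Within its range of validity your argument is fine, and in the split case it can be made cleaner than you suggest: once $A\otimes_R R'\cong M_2(R')$ one has $V\otimes_R R'\cong H(R'^{\,n+2})$, whose Clifford algebra is Azumaya, and any homomorphism out of an Azumaya algebra whose kernel meets the centre trivially is injective — so the explicit combinatorial matching of $Z_n(\alpha,v,w)$ with $\mathcal{S}_{n+1}(v',w')$ that you identify as the main obstacle is not actually needed for injectivity (though that identification is real and is asserted in Remark \ref{rmk1}). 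To cover the theorem as stated you would either have to restrict to non-singular $A$ or replace the descent step by an argument along the lines of \cite[Theorem 2.7]{CV2}.
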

\begin{proof}
Let $ker(\phi)$ denote the kernel of $\phi.$ Since $\phi$ restricts to an injective map on $V$ and $R \hookrightarrow A$, we have $ker(\phi) \cap R = \{0\}.$ 
Then it follows from \cite[Theorem 2.7]{CV2} that $\phi$ is injective.

Since $M_{2^{n+1}}(A) = M_{2^{n+1}}(R) \otimes A$, one can see that its rank is $2^{2n+4}$, the same as $\rank(Cl) = 2^{\rank(V)}.$ \end{proof}

\begin{rmk}
If the quadratic space $(A,q_A)$ (and thus $(V,q)$) is non-singular, then both $M_{2^{n+1}}(A)$ and the Clifford algebra $Cl(V)$ are graded Azumaya algebras of the same rank (see \cite[Theorem 3.6.8, Corollary 3.7.5]{HM}). In that case, $\phi: Cl(V) \rightarrow M_{2^{n+1}}(A)$ will be an isomorphism (see \cite[Lemma 6.7.10]{HM}).
\end{rmk}

\section{Clifford algebra: the octonion case}\label{octonioncl}

\subsection{The embedding in the endomorphism ring}
Let $O$ be an octonion algebra over $R$.
The problem here is that the matrix algebra $M_{2^{n+1}}(O)$ is not associative anymore. However the octonion algebra $O$ has the interesting property that $ \overline{\alpha}(\alpha \beta) = \alpha(\overline{\alpha} \beta) = q(\alpha) \beta$ for all $\alpha, \beta \in O.$ (See \cite[Ch. V, \S 7]{Kn}).

Putting it another way, consider the left multiplication map $L : O \rightarrow \End(O)$
where $L_{\alpha}$ is left-multiplication by $\alpha.$ Since $q(\alpha) = q(\overline{\alpha}),$ these maps 
satisfy the property that $L_{\alpha}L_{\bar{\alpha}} = L_{\bar{\alpha}} L_{\alpha} = L_{q(\alpha)}.$
We will modify the matrices $Z_i(\alpha, v, w)$ by replacing $\alpha$ with $L_{\alpha}$ in the matrix.
\vskip 1mm

\noindent Define  $Z'_1(\alpha,v,w) = \left(\begin{smallmatrix}
a_1     &  L_{\alpha} \\  
-L_{\overline{\alpha}}      &  b_1\end{smallmatrix}\right)$ and $\overline{Z'_1(\alpha,v,w)} =  \begin{psmallmatrix}
b_1    &  -L_{\alpha} \\  
L_{\overline{\alpha}}       &  a_1\end{psmallmatrix}.$ 
\vskip 2mm

\noindent For $i>1$, define recursively the matrices $Z'_i (\alpha,v,w) := \left(\begin{smallmatrix}
a_i     &  Z'_ {i-1}\\  
-\overline{Z'_{i-1}}       &  b_i\end{smallmatrix}\right)$ and $\overline{Z'_i(\alpha,v,w)} :=  \left(\begin{smallmatrix}
b_i    &  -Z'_{i-1} \\  
\overline{Z'_{i-1}}       &  a_i\end{smallmatrix}\right).$

\subsection{The Clifford algebra}

We have the map
$\phi : Cl(V)  \rightarrow  M_{2^{n+1}}(\End(O))$ given by 
\[
(\alpha,v,w) \rightarrow
\begin{bmatrix}
0 & Z'_n(\alpha, v, w)\\
\overline{Z'_n(\alpha,v,w) }& 0\\
\end{bmatrix}
\]

\begin{thm}
The map $\phi: Cl(V) \rightarrow M_{2^{n+1}}(\End(O))$ is injective. Moreover, $\rank[Cl(V)] = \rank[M_{2^{n+1}}(\End(O))].$
\end{thm}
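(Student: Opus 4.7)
The plan is to follow the strategy of Theorem~\ref{iso} almost verbatim, with the twist that the non-associativity of $O$ is absorbed by passing to its associative endomorphism ring $\operatorname{End}(O)$. Since $\operatorname{End}(O)$ is associative for any $O$, the matrix algebra $M_{2^{n+1}}(\operatorname{End}(O))$ is an honest associative $R$-algebra, so the universal property of the Clifford algebra can be invoked to produce $\phi$.

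The first step is to check that the assignment $(\alpha,v,w) \mapsto \begin{psmallmatrix} 0 & Z'_n \\ \overline{Z'_n} & 0 \end{psmallmatrix}$ squares to the scalar $q(\alpha,v,w)$ times the identity. This reduces to proving $Z'_n \overline{Z'_n} = \overline{Z'_n} Z'_n = q(\alpha,v,w)\, I$, which I would establish by induction on $n$. The base case $n=1$ is where the octonionic structure is used in an essential way: the identity $L_\alpha L_{\overline{\alpha}} = L_{\overline{\alpha}} L_\alpha = L_{q(\alpha)}$ recalled just before Section~\ref{octonioncl} is exactly what makes the diagonal entries of $Z'_1 \overline{Z'_1}$ come out as $(a_1 b_1 + q(\alpha))\, I$, while the off-diagonal entries vanish because $a_1, b_1 \in R$ commute with every $R$-linear endomorphism. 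The inductive step is then a formal matrix computation identical to the Suslin-matrix recursion, since all non-scalar entries of $Z'_i$ are $R$-linear maps and the scalars $a_i, b_i$ commute with them inside $\operatorname{End}(O)$.

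Having secured the universal property, the map $\phi : Cl(V) \to M_{2^{n+1}}(\operatorname{End}(O))$ exists as an $R$-algebra homomorphism. For injectivity I would apply the criterion \cite[Theorem 2.7]{CV2} used in the quaternion case. One observes that $R \hookrightarrow O \hookrightarrow \operatorname{End}(O)$ via $r \mapsto L_r$, so $\phi$ is injective on scalars, giving $\ker(\phi) \cap R = \{0\}$; moreover $\phi|_V$ is injective because the diagonal scalar entries of $Z'_n$ recover the $a_i$ and $b_i$, while the innermost off-diagonal block determines $L_\alpha$, and thereby $\alpha$ (since $L_\alpha(1) = \alpha$). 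These two facts combined with the cited result yield injectivity of $\phi$ on all of $Cl(V)$.

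Finally the rank equality is a direct arithmetic check: $\rank[Cl(V)] = 2^{\rank V} = 2^{8 + 2n}$, while $\rank[M_{2^{n+1}}(\operatorname{End}(O))] = (2^{n+1})^2 \cdot \rank \operatorname{End}(O) = 2^{2n+2} \cdot 64 = 2^{2n+8}$, and these match. The main obstacle I expect is really the first step: checking that after substituting $\alpha$ by $L_\alpha$ the recursion $Z'_n \overline{Z'_n} = q(\alpha,v,w)\, I$ still collapses to a scalar despite $O$ itself being non-associative. The whole argument hinges on the alternativity identity $L_\alpha L_{\overline{\alpha}} = L_{q(\alpha)}$ that composition algebras enjoy, which is exactly what rescues the Suslin-matrix style calculation from the failure of full associativity.
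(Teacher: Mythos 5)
Your proposal is correct and follows essentially the same route as the paper: invoke the universal property after checking $Z'_n\overline{Z'_n}=q(\alpha,v,w)I$ via the identity $L_{\alpha}L_{\overline{\alpha}}=L_{q(\alpha)}$, deduce injectivity from $\ker(\phi)\cap R=\{0\}$ together with \cite[Theorem 2.7]{CV2}, and match the ranks $2^{2n+8}$ on both sides. Your explicit inductive verification that the map squares to the quadratic form is a detail the paper leaves implicit, but the argument is the same.
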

\begin{proof}
The proof is similar to Theorem $\ref{iso}.$
Since $\phi$ restricts to an injective map on $V$ and $R \hookrightarrow A$, we have $ker(\phi) \cap R = \{0\}.$ 
Then it follows from \cite[Theorem 2.7]{CV2} that $\phi$ is injective.

Note that $\rank[\End(O)]  = 64$ because $\rank(O) =8.$ Since $M_{2^{n+1}}(\End(O)) = M_{2^{n+1}}(R) \otimes \End(O)$, one can see that its rank is $2^{2n+8}$ which is the same as $\rank(Cl) = 2^{\rank(V)}.$ 
\end{proof}

The algebra $M_{2^{n+1}}(\End(O))$ has a $\mathbb{Z}_2$-grading where the the even and odd elements are matrices of the form
$(\begin{smallmatrix}
*    &  0  \\
0       &*  \end{smallmatrix})$ and $(\begin{smallmatrix}
 0   &  *  \\
*        & 0 \end{smallmatrix})$. The homomorphism $\phi$ is therefore a $\mathbb{Z}_2$-graded homomorphism.
Here too, if we restrict ourselves to non-singular quadratic forms, then the map $\phi$ (being a graded-homomorphism between graded Azumaya algebras of the same rank) is an isomorphism \cite[Lemma 6.7.10]{HM}. The paper \cite{CV2} analyzes such \emph{embeddings} for general quadratic spaces, in particular describing the structure of the Clifford algebra and Spin groups.

%%%
%%  Composition of unimodular rows
%
%%%%%%%%

%%%
%%%%
%%   
%%%
%%
%
%%%%%%%%

\subsection{Composition in $H(R^5)$ using Octonion multiplication.}
Let $v = (a, v_1) $ and $w = (b,w_1)$, where $(v_1, w_1) \in H(R^4).$ Let us identify elements of $H(R^4)$ with the elements of the split octonion algebra -  write $O_i = (v_i, w_i) $ with $q(O_i) = O_i\overline{O_i} = v_i \cdot w_i^{\intercal }.$ 
Let $X = \begin{psmallmatrix}
a     &  O_1 \\  
-\overline{O_1}       &  b\end{psmallmatrix}$ and 
$Y = \begin{psmallmatrix}
a   &  O_2\\   
-\overline{O_2}       &  b'\end{psmallmatrix}.$
\vskip 1mm

\noindent When $q_X = q_Y =1$, we have
\[X \odot Y
=
\begin{psmallmatrix}
a              &  O_1O_2\\\\    
-\overline{O_1O_2}       &    \hskip 5mm  b+ b' -abb'\end{psmallmatrix}\]
The product $X \odot Y$ corresponds to another pair $(v',w') \in H(R^5)$ and  $q(X \odot Y) =  v'\cdot w'^{\intercal } = 1.$ This composition is obviously non-associative.

\vskip 5mm

\textbf{Acknowledgements.} The author is grateful to the referee whose inputs improved the quality of the paper (both stylistically and mathematically).  The author would also like to thank Ravi Rao, Jean Fasel and Anand Sawant for all the conversations and constant encouragement. 

Special thanks to the Euler International Mathematical Institute, Russia for their generous invitation to the Algebraic groups conference in 2019, and for their excellent hospitality in St. Petersburg -  where some of the early thoughts in the paper took a clear shape.  
The author was supported by the DST-Inspire Fellowship in India.

%%--------------------Here the manuscript ends--------------------------------
\end{document}